\providecommand{\U}[1]{\protect\rule{.1in}{.1in}}
\newtheorem{theorem}{Theorem}
\newtheorem{corollary}[theorem]{Corollary}
\newtheorem{definition}[theorem]{Definition}
\newtheorem{example}[theorem]{Example}
\newtheorem{proposition}[theorem]{Proposition}
\newtheorem{remark}[theorem]{Remark}
\newenvironment{proof}[1][Proof]{\noindent\textbf{#1.} }{\ \rule{0.5em}{0.5em}}
\begin{document}

\title{Nonequivalence of Controllability Properties for Piecewise Linear Markov
Switch Processes}
\author{Dan Goreac\thanks{Universit\'{e} Paris-Est, LAMA (UMR 8050), UPEMLV, UPEC,
CNRS, F-77454, Marne-la-Vall\'{e}e, France, Dan.Goreac@u-pem.fr}\thanks{This
work has been partially supported by he French National Research Agency
project PIECE, number \textbf{{ANR-12-JS01-0006}.}}
\and \date{}}
\maketitle

\begin{center}
{\footnotesize \textbf{Abstract}}
\end{center}

{\footnotesize In this paper we study the exact null-controllability property
for a class of controlled PDMP of switch type with switch-dependent, piecewise
linear dynamics and multiplicative jumps. First, we show that exact
null-controllability induces a controllability metric. This metric is linked
to a class of backward stochastic Riccati equations. Using arguments similar
to the euclidian-valued BSDE in \cite{CFJ_2014}, the equation is shown to be
equivalent to an iterative family of deterministic Riccati equations that are
solvable. Second, we give an example showing that, for switch-dependent
coefficients, exact null-controllability is strictly stronger than approximate
null-controllability. Finally, we show by convenient examples that no
hierarchy holds between approximate (full) controllability and exact
null-controllability. The paper is intended as a complement to
\cite{GoreacGrosuRotenstein_2016} and \cite{GoreacMartinez2015}.\newline}

\begin{center}
{\footnotesize \textbf{R\'{e}sum\'{e}}}
\end{center}

{\footnotesize Nous \'{e}tudions la propri\'{e}t\'{e} de
z\'{e}ro-contr\^{o}labilit\'{e} exacte pour une classe de processus de type
Markovien \`{a} switch ayant des dynamiques lin\'{e}aires par morceaux \`{a}
coefficients switch\'{e}s et bruit multiplicatif. Premi\`{e}rement, nous
montrons que la z\'{e}ro-contr\^{o}labilit\'{e} exacte induit une m\'{e}trique
de contr\^{o}labilit\'{e}. Celle-ci est li\'{e}e \`{a} une classe
d'\'{e}quations stochastiques r\'{e}trogrades de type Riccati. En employant
des arguments similaires aus EDSR classiques (\cite{CFJ_2014}), l'\'{e}quation
de Riccati se r\'{e}duit \`{a} une famille d'\'{e}quations it\'{e}ratives
d\'{e}terministes de type Riccati qui admettent une solution unique.
Deuxi\`{e}mement, nous pr\'{e}sentons un exemple montrant que, pour des
syst\`{e}mes \`{a} coefficients switch\'{e}s, la propri\'{e}t\'{e} de
z\'{e}ro-contr\^{o}labilit\'{e} exacte est strictement plus forte que celle de
z\'{e}ro-contr\^{o}labilit\'{e} approch\'{e}e. Finalement, nous montrons,
\`{a} l'aide d'exemples particuliers, l'impossibilit\'{e} \`{a} \'{e}tablir
une hi\'{e}rarchie entre les propri\'{e}t\'{e}s de contr\^{o}labilit\'{e}
approch\'{e}e (vers une cible arbitraire) et celle de
z\'{e}ro-contr\^{o}labilit\'{e} exacte. Ce travail doit \^{e}tre regard\'{e}
en compl\'{e}ment des \'{e}tudes men\'{e}es dans
\cite{GoreacGrosuRotenstein_2016} et \cite{GoreacMartinez2015}.}

\section{Introduction}

We study the exact null-controllability property for a class of piecewise
deterministic Markov processes of switch type. More precisely, our model
belongs to Markovian systems consisting of a couple mode/ trajectory $\left(
\Gamma,X\right)  .$ The mode $\Gamma$ is a pure jump uncontrolled Markov
process corresponding to spikes inducing regime switching. The second
component $X$ obeys a controlled linear stochastic differential equation (SDE)
with respect to the compensated random measure associated to $\Gamma$. The
linear coefficients governing the dynamics depend on the current mode.

The exact null-controllability problem concerns criteria allowing one to drive
the $X_{T}$ component to zero. This property is of particular importance in
the study of regulatory networks (e.g. \cite{cook_gerber_tapscott_98},
\cite{hasty_pradines_dolnik_collins_00}, \cite{Krishna29032005},
\cite{crudu_debussche_radulescu_09}, etc.) to distinguish, for example, lytic
pathways (e.g. \cite{hasty_pradines_dolnik_collins_00}).

An extensive literature on controllability is available in different
frameworks: finite-dimensional deterministic setting (Kalman's condition,
Hautus test \cite{Hautus}), infinite dimensional settings (via invariance
criteria in \cite{Schmidt_Stern_80}, \cite{Curtain_86},
\cite{russell_Weiss_1994}, \cite{Jacob_Zwart_2001},
\cite{Jacob_Partington_2006}, etc.), Brownian-driven control systems (exact
terminal-controllability in \cite{Peng_94}, approximate controllability in
\cite{Buckdahn_Quincampoix_Tessitore_2006}, \cite{G17}, mean-field
Brownian-driven systems in \cite{G1}, infinite-dimensional setting in
\cite{Fernandez_Cara_Garrido_atienza_99}, \cite{Sarbu_Tessitore_2001},
\cite{Barbu_Rascanu_Tessitore_2003}, \cite{G16}, etc.), jump systems
(\cite{G10}, \cite{GoreacMartinez2015}, etc.). We refer to
\cite{GoreacMartinez2015} for more details on the literature as well as
applications one can address using switch models.

The recent papers \cite{GoreacMartinez2015} and
\cite{GoreacGrosuRotenstein_2016} consider different characterizations of
approximate and approximate null-controllability properties for the same class
of systems. However, they do not address the question of exact
null-controllability (which, in non-stochastic time-homogeneous framework, is
identified with approximate null-controllability).

The aim of the present paper is to offer a complement to the research topics
in \cite{GoreacMartinez2015} and \cite{GoreacGrosuRotenstein_2016}. In
\cite{GoreacMartinez2015}, a Riccati-type argument is used to characterize
approximate controllability for systems with constant coefficients. Our first
aim (in Section \ref{Sec3.1}) is to give an answer to the problem left open in
\cite[Remark 4]{GoreacMartinez2015} where a family of backward stochastic
Riccati equations are presented and absence of results on the solvability is
mentioned. As a by-product, the result provides a metric-type characterization
of exact null-controllability in Section \ref{Sec3.2}. In Section
\ref{Sec3.3}, we give an example of approximate null-controllable system which
fails to be exactly null-controllable. Finally, in Section \ref{Sec3.4}, we
show by convenient examples that no hierarchy can be established between
approximate (full) controllability and exact null-controllability. In other
words, we present an example of approximate controllable yet non exactly
null-controllable system and an example of null-controllable system which
fails to be approximately controllable in certain directions.

We begin with presenting the model and the standing assumptions in Section
\ref{Sec2.1}. The technical constructions allowing to prove the theoretical
results are gathered in Section \ref{SubsTechnicalPrelim}. The controllability
notions (exact null, approximate null, approximate) are given in Section
\ref{Sec2.3}. We gather in the same section some useful results on approximate
and approximate null-controllability given in \cite{GoreacMartinez2015} and
\cite{GoreacGrosuRotenstein_2016}. The main results on Riccati BSDEs are given
in Section \ref{Sec3.1}. The method allowing to deal with this stochastic
system is based on the recent ideas in \cite{CFJ_2014}. As a by-product, the
result on Riccati BSDE provides a metric-type characterization of exact
null-controllability in Section \ref{Sec3.2}. Hierachy (or absence of) between
exact null-controllability and approximate null-controllability (resp.
approximate full controllability) make the object of Section \ref{Sec3.3}
(resp. Section \ref{Sec3.4}).

\section{Model and Preliminaries}

\subsection{The Model\label{Sec2.1}}

We briefly recall the construction of a particular class of pure jump, non
explosive processes on a space $\Omega$ and taking their values in a metric
space $\left(  E,\mathcal{B}\left(  E\right)  \right)  .$ Here, $\mathcal{B}%
\left(  E\right)  $ denotes the Borel $\sigma$-field of $E.$ The elements of
the space $E$ are referred to as modes. These elements can be found in
\cite{davis_93} in the particular case of piecewise deterministic Markov
processes (see also \cite{Bremaud_1981}). To simplify the arguments, we assume
that $E$ is finite and we let $p\geq1$ be its cardinal. The process is
completely described by a couple $\left(  \lambda,Q\right)  ,$ where
$\lambda:E\longrightarrow%
\mathbb{R}
_{+}$ and the measure $Q:E\longrightarrow\mathcal{P}\left(  E\right)  $, where
$\mathcal{P}\left(  E\right)  $ stands for the set of probability measures on
$\left(  E,\mathcal{B}\left(  E\right)  \right)  $ such that \ $Q\left(
\gamma,\left\{  \gamma\right\}  \right)  =0.$ Given an initial mode
$\gamma_{0}\in E,$ the first jump time satisfies $\mathbb{P}^{0,\gamma_{0}%
}\left(  T_{1}\geq t\right)  =\exp\left(  -t\lambda\left(  \gamma_{0}\right)
\right)  .$ The process $\Gamma_{t}^{\gamma_{0}}:=\gamma_{0},$ on $t<T$
$_{1}.$ The post-jump location $\gamma^{1}$ has $Q\left(  \gamma_{0}%
,\cdot\right)  $ as conditional distribution. Next, we select the inter-jump
time $T_{2}-T_{1}$ such that $\mathbb{P}^{0,\gamma_{0}}\left(  T_{2}-T_{1}\geq
t\text{ }/\text{ }T_{1},\gamma^{1}\right)  =\exp\left(  -t\lambda\left(
\gamma^{1}\right)  \right)  $ and set $\Gamma_{t}^{\gamma_{0}}:=\gamma^{1},$
if $t\in\left[  T_{1},T_{2}\right)  .$ The post-jump location $\gamma^{2}$
satisfies $\mathbb{P}^{0,\gamma_{0}}\left(  \gamma^{2}\in A\text{ }/\text{
}T_{2},T_{1},\gamma^{1}\right)  =Q\left(  \gamma^{1},A\right)  ,$ for all
Borel set $A\subset E.$ And so on. To simplify arguments on the equivalent
ordinary differential system, following \cite[Assumption (2.17)]{CFJ_2014}, we
will assume that the system stops after a non-random, fixed number $M>0$ of
jumps i.e. $\mathbb{P}^{0,\gamma_{0}}\left(  T_{M+1}=\infty\right)  =1$.

We look at the process $\Gamma^{\gamma_{0}}$ under $\mathbb{P}^{0,\gamma_{0}}$
and denote by $\mathbb{F}^{0}$ the filtration $\left(  \mathcal{F}_{\left[
0,t\right]  }:=\sigma\left\{  \Gamma_{r}^{\gamma_{0}}:r\in\left[  0,t\right]
\right\}  \right)  _{t\geq0}.$ The predictable $\sigma$-algebra will be
denoted by $\mathcal{P}^{0}$ and the progressive $\sigma$-algebra by
$Prog^{0}.$ As usual, we introduce the random measure $q$ on $\Omega
\times\left(  0,\infty\right)  \times E$ by setting $q\left(  \omega,A\right)
=\sum_{k\geq1}1_{\left(  T_{k}\left(  \omega\right)  ,\Gamma_{T_{k}\left(
\omega\right)  }^{\gamma_{0}}\left(  \omega\right)  \right)  \in A},$ for all
$\omega\in\Omega,$ $A\in\mathcal{B}\left(  0,\infty\right)  \times
\mathcal{B}\left(  E\right)  .$ The compensated martingale measure is denoted
by $\widetilde{q}$. (Further details on the compensator are given in Section
\ref{SubsTechnicalPrelim}.)

We consider a switch system given by a process $(X(t),\Gamma^{\gamma_{0}}(t))$
on the state space $%
\mathbb{R}
^{N}\times E,$ for some $N\geq1$ and the family of modes $E$. $\ $The control
state space is assumed to be some Euclidian space $%
\mathbb{R}
^{d},$ $d\geq1$. The component $X(t)$ follows a controlled differential system
depending on the hidden variable $\gamma$. We will deal with the following
model.%
\begin{equation}
\left\{
\begin{array}
[c]{l}%
dX_{s}^{x,u}=\left[  A\left(  \Gamma_{s}^{\gamma_{0}}\right)  X_{s}%
^{x,u}+B\left(  \Gamma_{s}^{\gamma_{0}}\right)  u_{s}\right]  ds+\int%
_{E}C\left(  \Gamma_{s-}^{\gamma_{0}},\theta\right)  X_{s-}^{x,u}%
\widetilde{q}\left(  ds,d\theta\right)  ,\text{ }s\geq0,\\
\text{ }X_{0}^{x,u}=x.
\end{array}
\right.  \label{SDE0}%
\end{equation}
The operators $A\left(  \gamma\right)  \in%
\mathbb{R}
^{N\times N}$ , $B\left(  \gamma\right)  \in%
\mathbb{R}
^{N\times d}$ and $C\left(  \gamma,\theta\right)  \in%
\mathbb{R}
^{N\times N}$, for all $\gamma,\theta\in E$. For linear operators, we denote
by $\ker$ their kernel and by $\operatorname{Im}$ the image (or range) spaces.
Moreover, the control process $u:\Omega\times%
\mathbb{R}
_{+}\longrightarrow%
\mathbb{R}
^{d}$ is an $%
\mathbb{R}
^{d}$-valued, $\mathbb{F}^{0}-$ progressively measurable, locally square
integrable process. The space of all such processes will be denoted by
$\mathcal{U}_{ad}$ and referred to as the family of admissible control
processes. The explicit structure of such processes can be found in
\cite[Proposition 4.2.1]{Jacobsen}, for instance. Since the control process
does not (directly) intervene in the noise term, the solution of the above
system can be explicitly computed with $\mathcal{U}_{ad}$ processes instead of
the (more usual) predictable processes.

\subsection{Technical Preliminaries\label{SubsTechnicalPrelim}}

Before giving the reduction of our backward Riccati stochastic equation to a
system of ordinary Riccati differential equations, we need to introduce some
notations making clear the stochastic structure of several concepts : final
data, predictable and c\`{a}dl\`{a}g adapted processes and compensator of the
initial random measure. The notations in this subsection follow the ordinary
differential approach from \cite{CFJ_2014}. Since we are only interested in
what happens on $\left[  0,T\right]  ,$ we introduce a cemetery state $\left(
\infty,\overline{\gamma}\right)  $ which will incorporate all the information
after $T\wedge T_{M}.$ It is clear that the conditional law of $T_{n+1}$ given
$\left(  T_{n},\Gamma_{T_{n}}^{\gamma_{0}}\right)  $ is now composed by an
exponential part on $\left[  T_{n}\wedge T,T\right]  $ and an atom at
$\infty.$ Similarly, the conditional law of $\Gamma_{T_{n+1}}^{\gamma_{0}}$
given $\left(  T_{n+1},T_{n},\Gamma_{T_{n}}^{\gamma_{0}}\right)  $ is the
Dirac mass at $\overline{\gamma}$ if $T_{n+1}=\infty$ and given by $Q$
otherwise. Finally, under the assumption $\mathbb{P}^{0,\gamma_{0}}\left(
T_{M+1}=\infty\right)  =1$, after $T_{M},$ the marked point process is
concentrated at the cemetery state.

We set $\overline{E}_{T}:\mathcal{=}\left(  \left[  0,T\right]  \times
E\right)  \cup\left\{  \left(  \infty,\overline{\gamma}\right)  \right\}  $.
For every $n\geq1,$ we let $\overline{E}_{T,n}\subset\left(  \overline{E}%
_{T}\right)  ^{n+1}$ be the set of all marks of type $e=\left(  \left(
t_{0},\gamma_{0}\right)  ,...,\left(  t_{n},\gamma_{n}\right)  \right)  ,$
where%
\begin{equation}
t_{0}=0,\text{ }\left(  t_{i}\right)  _{0\leq i\leq n}\text{ are
non-decreasing; }t_{i}<t_{i+1},\text{ if }t_{i}\leq T\text{; }\left(
t_{i},\gamma_{i}\right)  =\left(  \infty,\overline{\gamma}\right)  \text{, if
}t_{i}\leq T,\text{ }\forall0\leq i\leq n-1,\text{ } \label{Def_e_Trajectory}%
\end{equation}
and endow it with the family of all Borel sets $\mathcal{B}_{n}$. For these
sequences, the maximal time is denoted by $\left\vert e\right\vert :=t_{n}$.
Moreover, by abuse of notation, we set $\gamma_{\left\vert e\right\vert
}:=\gamma_{n}.$ Whenever $T\geq t>\left\vert e\right\vert ,$ we set
\begin{equation}
e\oplus\left(  t,\gamma\right)  :=\left(  \left(  t_{0},\gamma_{0}\right)
,...,\left(  t_{n},\gamma_{n}\right)  ,\left(  t,\gamma\right)  \right)
\in\overline{E}_{T,n+1}. \label{Def_e_Concatenation}%
\end{equation}
By defining
\begin{equation}
e_{n}:=\left(  \left(  0,\gamma_{0}\right)  ,\left(  T_{1},\Gamma_{T_{1}%
}^{\gamma_{0}}\right)  ,...,\left(  T_{n},\Gamma_{T_{n}}^{\gamma_{0}}\right)
\right)  , \label{en_RandomVar}%
\end{equation}
we get an $\overline{E}_{T,n}-$valued random variable, corresponding to our
mode trajectories.

\textbf{A c\`{a}dl\`{a}g process} $Y$ \textbf{continuous except, maybe, at
switching times} $T_{n}$ and taking its values in a topological vector space
$\mathcal{S}$ is given by the existence of a family of $\mathcal{B}_{n}%
\otimes\mathcal{B}\left(  \left[  0,T\right]  \right)  /\mathcal{B}\left(
\mathcal{S}\right)  $-measurable functions $y^{n}$ such that, for all
$e\in\overline{E}_{T,n},$ $y^{n}\left(  e,\cdot\right)  $ is continuous on
$\left[  0,T\right]  $ and constant $\left[  0,T\wedge\left\vert e\right\vert
\right]  $ and
\begin{equation}
\left.  \text{If }\left\vert e\right\vert =\infty,\text{ then }y^{n}\left(
e,\cdot\right)  =0.\text{ Otherwise, on }T_{n}\left(  \omega\right)  \leq
t<T_{n+1}\left(  \omega\right)  ,\text{ }y_{t}\left(  \omega\right)
=y^{n}\left(  e_{n}\left(  \omega\right)  ,t\right)  ,\text{ }t\leq
T\text{.}\right.  \label{Def_Cadlag}%
\end{equation}

Similar, an $\mathcal{S}-$valued $\mathbb{F}$-\textbf{predictable process} $Z$
defined on $\Omega\times\left[  0,T\right]  \times E$ is given by the
existence of a family of $\mathcal{B}_{n}\otimes\mathcal{B}\left(  \left[
0,T\right]  \right)  \otimes\mathcal{B}\left(  E\right)  /\mathcal{B}\left(
\mathcal{S}\right)  -$measurable functions $z^{n}$ satisfying%
\begin{equation}
\left.  \text{If }\left\vert e\right\vert =\infty,\text{ then }z^{n}\left(
e,\cdot,\cdot\right)  =0.\text{ On }T_{n}\left(  \omega\right)  <t\leq
T_{n+1}\left(  \omega\right)  ,\text{ }z_{t}\left(  \omega,\gamma\right)
=z^{n}\left(  e_{n}\left(  \omega\right)  ,t,\gamma\right)  ,\text{ for }t\leq
T\text{, }\gamma\in E\text{.}\right.  \label{Def_Predictable}%
\end{equation}

To deduce the form of \textbf{the compensator}, one simply writes
\[
\widehat{q}\left(  \omega,dt,d\gamma\right)  :=%
{\displaystyle\sum\limits_{n\geq0}}
\widehat{q}_{e_{n}\left(  \omega\right)  }^{n}\left(  dt,d\gamma\right)
1_{T_{n}\left(  \omega\right)  <t\leq T_{n+1}\left(  \omega\right)  \wedge T}%
\]
such that%
\begin{equation}%
\begin{array}
[c]{l}%
\text{If }n\geq M,\text{ then }\widehat{q}_{e}^{n}\left(  dt,d\gamma\right)
=\delta_{\overline{\gamma}}\left(  d\gamma\right)  \delta_{\infty}\left(
dt\right)  \smallskip\text{. }\\
\text{If }n\leq M-1,\smallskip\widehat{q}_{e}^{n}\left(  dt,d\gamma\right)
=\lambda(\gamma_{\left\vert e\right\vert })Q(\gamma_{\left\vert e\right\vert
},d\gamma)1_{\left\vert e\right\vert <\infty,t\in\left[  \left\vert
e\right\vert ,T\right]  }Leb\left(  dt\right)  +\delta_{\overline{\gamma}%
}\left(  d\gamma\right)  \delta_{\infty}\left(  dt\right)  1_{\left(
\left\vert e\right\vert <\infty,t>T\right)  \cup\left\vert e\right\vert
=\infty}.\smallskip
\end{array}
\label{Def_Compensator}%
\end{equation}

The \textbf{coefficient} function\textbf{ }$A\left(  \Gamma_{t}^{\gamma_{0}%
}\right)  $ is adapted and can be seen as follows: if $\left\vert e\right\vert
=\infty,$ then $A=0;$ otherwise, one works with $A\left(  \gamma_{\left\vert
e\right\vert }\right)  .$ Similar constructions hold true for $C.$ In fact,
the results of the present paper can be generalized to more general
path-dependence of the coefficients.

\subsection{Approximate Controllability, Exact and Approximate
Null-Controllability\label{Sec2.3}}

We will be dealing with the following notions of controllability.

\begin{definition}
(i) Given the finite time horizon $T>0$, the system (\ref{SDE0}) is said to be
approximately controllable (with initial mode $\gamma_{0}\in E)$ if, for every
final data $\xi\in\mathbb{L}^{2}\left(  \Omega,\mathcal{F}_{\left[
0,T\right]  },\mathbb{P}^{0,\gamma_{0}};%
\mathbb{R}
^{N}\right)  $ (i.e. $\mathcal{F}_{\left[  0,T\right]  }$-measurable, square
integrable), every initial condition $x\in%
\mathbb{R}
^{N}$ and every $\varepsilon>0$, there exists some admissible control process
$u\in\mathcal{U}_{ad}$ such that $\mathbb{E}^{0,\gamma_{0}}\left[  \left\vert
X_{T}^{x,u}-\xi\right\vert ^{2}\right]  \leq\varepsilon.$ \newline(ii) The
system (\ref{SDE0}) is said to be approximately null-controllable if the
previous condition holds for $\xi=0.$ \newline(iii) The system (\ref{SDE0}) is
said to be (exactly) null-controllable (with initial mode $\gamma_{0}\in E)$
if, for every initial condition $x\in%
\mathbb{R}
^{N}$ there exists some admissible control process $u\in\mathcal{U}_{ad}$ such
that $X_{T}^{x,u}=0,$ $\mathbb{P}^{0,\gamma_{0}}$-a.s..
\end{definition}

The approach of \cite[Theorem 1]{GoreacMartinez2015} relies on the duality
between the concepts of controllability and observability. For these reasons,
one introduces the backward stochastic differential equation.
\begin{equation}
\left\{
\begin{array}
[c]{l}%
dY_{t}^{T,\xi}=\int_{E}Z_{t}^{T,\xi}\left(  \theta\right)  \widetilde{q}%
\left(  dt,d\theta\right)  -A^{\ast}\left(  \Gamma_{t}^{\gamma_{0}}\right)
Y_{t}^{T,\xi}dt-\int_{E}C^{\ast}\left(  \Gamma_{t}^{\gamma_{0}},\theta\right)
Z_{t}^{T,\xi}\left(  \theta\right)  \widehat{q}\left(  dt,d\theta\right)  ,\\
Y_{T}^{T,\xi}=\xi\in\mathbb{L}^{2}\left(  \Omega,\mathcal{F}_{\left[
0,T\right]  },\mathbb{P}^{0,\gamma_{0}};%
\mathbb{R}
^{N}\right)  .
\end{array}
\right.  \label{BSDE0}%
\end{equation}
The following characterization follows from standard considerations on the
controllability linear operator(s) (cf. \cite[Theorem 1]{GoreacMartinez2015}).

\begin{theorem}
[{\cite[Theorem 1]{GoreacMartinez2015}}]\label{dualityTh}The necessary and
sufficient condition for approximate null-controllability (resp. approximate
controllability) of (\ref{SDE0}) with initial mode $\gamma_{0}\in E$ is that
any solution $\left(  Y_{t}^{T,\xi},Z_{t}^{T,\xi}\left(  \cdot\right)
\right)  $ of the dual system (\ref{BSDE0}) for which $Y_{t}^{T,\xi}\in\ker
B^{\ast}\left(  \Gamma_{t}^{\gamma_{0}}\right)  $ $,$ $\mathbb{P}%
^{0,\gamma_{0}}\mathbb{\otimes}Leb$ almost everywhere on $\Omega\times\left[
0,T\right]  $ should equally satisfy $Y_{0}^{T,\xi}=0,$ $\mathbb{P}%
^{0,\gamma_{0}}-$almost surely (resp. $Y_{t}^{T,\xi}=0,$ $\mathbb{P}%
^{0,\gamma_{0}}\mathbb{\otimes}Leb-a.s.$).
\end{theorem}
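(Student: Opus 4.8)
The plan is to establish the equivalence between a controllability property of the forward system (\ref{SDE0}) and a kernel condition on the adjoint backward system (\ref{BSDE0}), exploiting the standard duality between controllability and observability. First I would write down the duality (integration-by-parts) identity relating the forward state $X^{x,u}_T$ to the backward data $(Y^{T,\xi},Z^{T,\xi})$. Concretely, applying the It\^o product rule to $\langle X^{x,u}_t, Y^{T,\xi}_t\rangle$ over $[0,T]$, the drift and jump terms governed by $A$, $C$ and the compensated measure $\widetilde q$ cancel against their adjoints $A^\ast$, $C^\ast$ appearing in (\ref{BSDE0}), leaving only the control contribution. This yields the fundamental relation
\begin{equation}
\mathbb{E}^{0,\gamma_0}\left[\langle X^{x,u}_T,\xi\rangle\right]=\langle x,Y^{T,\xi}_0\rangle+\mathbb{E}^{0,\gamma_0}\left[\int_0^T\langle u_s, B^\ast(\Gamma^{\gamma_0}_s)Y^{T,\xi}_s\rangle\,ds\right].
\nonumber
\end{equation}
The main bookkeeping obstacle is to verify that this cancellation is exact under the present switch-dependent, jump-driven setting: one must check that $\langle C(\Gamma_{s-},\theta)X_{s-},Y_s\rangle$ integrated against $\widetilde q$ pairs correctly with the $C^\ast$ term written against the compensator $\widehat q$, using that $\widetilde q=q-\widehat q$ and that the purely-discontinuous martingale part has zero expectation.

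Next I would introduce the controllability operator viewpoint. Fix $x=0$ and regard the map $u\mapsto X^{0,u}_T$ as a bounded linear operator $L_T:\mathcal{U}_{ad}\to\mathbb{L}^2(\Omega,\mathcal{F}_{[0,T]},\mathbb{P}^{0,\gamma_0};\mathbb{R}^N)$. Approximate null-controllability (resp. approximate controllability) is precisely the statement that $0$ (resp. every $\xi$) lies in the closure of $\operatorname{Im}L_T$ (after accounting for the deterministic shift generated by the initial condition $x$ through the homogeneous flow). By the closed-range/orthogonality characterization, this closure equals the orthogonal complement of $\ker L_T^\ast$. The identity above identifies the adjoint: reading off the coefficient of $u$, one sees that $L_T^\ast\xi$ is represented by the process $s\mapsto B^\ast(\Gamma^{\gamma_0}_s)Y^{T,\xi}_s$, while the term $\langle x,Y^{T,\xi}_0\rangle$ encodes the value that the uncontrolled trajectory contributes.

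I would then translate the kernel condition. The hypothesis $Y^{T,\xi}_t\in\ker B^\ast(\Gamma^{\gamma_0}_t)$, $\mathbb{P}^{0,\gamma_0}\otimes Leb$-a.e., is exactly $B^\ast(\Gamma^{\gamma_0}_s)Y^{T,\xi}_s=0$, i.e. $\xi\in\ker L_T^\ast$. Under this condition the fundamental identity collapses to $\mathbb{E}^{0,\gamma_0}[\langle X^{0,u}_T,\xi\rangle]=\langle x,Y^{T,\xi}_0\rangle$ for all admissible $u$; combining with the shift from $x$, approximate null-controllability (density of attainable targets near $0$) forces $Y^{T,\xi}_0=0$, while approximate full controllability (density for every target $\xi$) forces $\ker L_T^\ast=\{0\}$, hence $Y^{T,\xi}_t=0$ identically. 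Conversely, if every element of $\ker L_T^\ast$ satisfies the stated vanishing, a Hahn--Banach separation argument shows $\overline{\operatorname{Im}L_T}$ contains the required targets, giving the controllability property.

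The step I expect to be delicate is the rigorous identification of $\operatorname{Im}L_T^\ast$ with the family $\{B^\ast(\Gamma^{\gamma_0}_\cdot)Y^{T,\xi}_\cdot\}$ together with wellposedness of (\ref{BSDE0}): one must argue existence and uniqueness of the pair $(Y^{T,\xi},Z^{T,\xi})$ in the appropriate $\mathbb{L}^2$ spaces of c\`adl\`ag adapted and predictable processes adapted to the switch filtration $\mathbb{F}^0$, and ensure that the finitely-many-jumps assumption $\mathbb{P}^{0,\gamma_0}(T_{M+1}=\infty)=1$ makes all the stochastic integrals genuine square-integrable martingales so that no boundary terms survive. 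Since the statement is cited from \cite[Theorem 1]{GoreacMartinez2015}, I would present this duality computation as the proof outline and refer to that work for the functional-analytic closure details rather than reproducing them.
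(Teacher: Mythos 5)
Your proposal is correct and follows exactly the route the paper intends: Theorem \ref{dualityTh} is quoted from \cite[Theorem 1]{GoreacMartinez2015} without proof, the text saying only that it ``follows from standard considerations on the controllability linear operator(s)''. Your It\^{o} integration-by-parts identity $\mathbb{E}^{0,\gamma_0}\left[\left\langle X^{x,u}_T,\xi\right\rangle\right]=\left\langle x,Y^{T,\xi}_0\right\rangle+\mathbb{E}^{0,\gamma_0}\left[\int_0^T\left\langle u_s, B^\ast\left(\Gamma^{\gamma_0}_s\right)Y^{T,\xi}_s\right\rangle ds\right]$, the resulting identification of the adjoint of $u\mapsto X^{0,u}_T$ with $\xi\mapsto B^\ast\left(\Gamma^{\gamma_0}_\cdot\right)Y^{T,\xi}_\cdot$, and the closure-of-range argument are precisely those standard considerations, and your flagged delicate points (the $\widetilde q$ versus $\widehat q$ cancellation and the $\mathbb{L}^2$ well-posedness of (\ref{BSDE0}) under $\mathbb{P}^{0,\gamma_{0}}\left(T_{M+1}=\infty\right)=1$) are the right ones.
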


Equivalent assertions are easily obtained by interpreting the system
(\ref{BSDE0}) as a controlled, forward one :%
\begin{equation}
dY_{t}^{y,v}=\int_{E}v_{t}\left(  \theta\right)  \widetilde{q}\left(
dt,d\theta\right)  -A^{\ast}\left(  \Gamma_{t}^{\gamma_{0}}\right)
Y_{t}^{y,v}dt-\int_{E}C^{\ast}\left(  \Gamma_{t}^{\gamma_{0}},\theta\right)
v_{t}\left(  \theta\right)  \widehat{q}\left(  dt,d\theta\right)  ,\text{
}Y_{0}^{y,v}=y\in%
\mathbb{R}
^{N}. \label{SDE'}%
\end{equation}
The family of admissible control processes is given by $v\in\mathcal{L}%
^{2}\left(  q;%
\mathbb{R}
^{N}\right)  $ i.e. the space of all $\mathcal{P}^{0}\otimes\mathcal{B}\left(
E\right)  $ - measurable, $%
\mathbb{R}
^{N}-$valued functions $v_{s}\left(  \omega,\theta\right)  $ on $\Omega\times%
\mathbb{R}
_{+}\times E$ such that
\[
\mathbb{E}^{0,\gamma_{0}}\left[  \int_{0}^{T}\int_{E}\left\vert v_{s}\left(
\theta\right)  \right\vert ^{2}\widehat{q}\left(  ds,d\theta\right)  \right]
<\infty,
\]
for all $T<\infty.$

Similar duality arguments yield the following characterization of (exact) null-controllability.

\begin{proposition}
\label{Exact0CtrlProp}The necessary and sufficient condition for exact
null-controllability at time $T>0$ of (\ref{SDE0}) with initial mode
$\gamma_{0}\in E$ is the existence of a positive constant $C_{T}>0$ such that
for every initial data $y\in%
\mathbb{R}
^{N}$ and every $v\in\mathcal{L}^{2}\left(  q;%
\mathbb{R}
^{N}\right)  ,$ one has $\left\vert y\right\vert ^{2}\leq C_{T}\mathbb{E}%
^{0,\gamma_{0}}\left[  \int_{0}^{T}\left\vert B^{\ast}\left(  \Gamma
_{t}^{\gamma_{0}}\right)  Y_{t}^{y,v}\right\vert ^{2}dt\right]  .$
\end{proposition}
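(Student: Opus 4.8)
The plan is to characterize exact null-controllability of \eqref{SDE0} through a functional-analytic duality between the controllability operator associated to \eqref{SDE0} and the observation operator associated to the forward system \eqref{SDE'}, then to translate the resulting range-inclusion into the desired coercivity estimate. First I would fix $T>0$ and the initial mode $\gamma_{0}\in E$, and introduce the bounded linear \emph{control-to-state} operator $\mathcal{L}_{T}:\mathcal{U}_{ad}\longrightarrow\mathbb{L}^{2}\left(\Omega,\mathcal{F}_{\left[0,T\right]},\mathbb{P}^{0,\gamma_{0}};\mathbb{R}^{N}\right)$ sending an admissible control $u$ to the terminal value $X_{T}^{0,u}$ of the solution of \eqref{SDE0} started at $x=0$. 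By linearity of \eqref{SDE0}, $X_{T}^{x,u}=\Phi_{T}x+\mathcal{L}_{T}u$, where $\Phi_{T}$ is the (random) terminal value of the free dynamics. Exact null-controllability is then precisely the assertion that for every $x\in\mathbb{R}^{N}$ there is $u\in\mathcal{U}_{ad}$ with $\mathcal{L}_{T}u=-\Phi_{T}x$, i.e. the range inclusion $\operatorname{Im}\left(\Phi_{T}\circ\iota\right)\subset\operatorname{Im}\mathcal{L}_{T}$, where $\iota:\mathbb{R}^{N}\hookrightarrow\mathbb{L}^{2}$ is the canonical embedding sending $x$ to the constant $x$.

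**Range inclusion via adjoint estimates**

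The key step is to apply the classical functional-analytic criterion (see e.g. the range-inclusion lemma underlying \cite[Theorem 1]{GoreacMartinez2015}): for bounded operators between Hilbert spaces, $\operatorname{Im}\mathcal{F}\subset\operatorname{Im}\mathcal{G}$ holds if and only if there is a constant $C_{T}>0$ with $\left\Vert\mathcal{F}^{\ast}\eta\right\Vert^{2}\leq C_{T}\left\Vert\mathcal{G}^{\ast}\eta\right\Vert^{2}$ for all $\eta$ in the appropriate dual space. Taking $\mathcal{F}=\Phi_{T}\circ\iota$ and $\mathcal{G}=\mathcal{L}_{T}$, I would compute both adjoints. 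Testing $\mathcal{G}^{\ast}$ against the forward dynamics \eqref{SDE'} and integrating by parts (the Itô product rule for the martingale measure $\widetilde{q}$ against the compensator $\widehat{q}$, exactly the duality already invoked for Theorem \ref{dualityTh}) identifies $\mathcal{L}_{T}^{\ast}\eta$ with the process $t\mapsto B^{\ast}\left(\Gamma_{t}^{\gamma_{0}}\right)Y_{t}^{y,v}$, read off from the solution of \eqref{SDE'} whose driving data encodes $\eta$; meanwhile $\left(\Phi_{T}\circ\iota\right)^{\ast}\eta$ reduces to the Euclidean projection $y$ of the initial data, so that $\left\Vert\left(\Phi_{T}\iota\right)^{\ast}\eta\right\Vert^{2}=\left\vert y\right\vert^{2}$. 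Substituting these two identifications into the range-inclusion inequality yields exactly the stated estimate $\left\vert y\right\vert^{2}\leq C_{T}\,\mathbb{E}^{0,\gamma_{0}}\left[\int_{0}^{T}\left\vert B^{\ast}\left(\Gamma_{t}^{\gamma_{0}}\right)Y_{t}^{y,v}\right\vert^{2}dt\right]$.

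**Main obstacle**

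The routine direction is the coercivity $\Rightarrow$ controllability implication, which follows by a standard Lax–Milgram / closed-range argument once the adjoints are correctly identified. The delicate part is the careful identification of $\mathcal{L}_{T}^{\ast}$ through the integration-by-parts formula pairing \eqref{SDE0} with \eqref{SDE'}, because one must handle the compensated measure $\widetilde{q}$ and its compensator $\widehat{q}$ correctly and verify that the boundary terms at $T$ and at the switching times $T_{n}$ (and at the cemetery state $\left(\infty,\overline{\gamma}\right)$) vanish or combine as claimed; this is where the piecewise structure \eqref{Def_Cadlag}–\eqref{Def_Compensator} enters. I expect this bookkeeping, together with confirming that $\operatorname{Im}\mathcal{L}_{T}$ is closed (equivalently, that the dual estimate furnishes an a priori bound making the state-reaching problem well-posed), to be the main technical burden; the a priori estimate itself is precisely what forces the constant $C_{T}$ to exist uniformly in $y$ and $v$.
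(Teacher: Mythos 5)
Your proposal is correct and follows essentially the same route as the paper: the paper's proof is precisely the duality argument of \cite[Theorem 1]{GoreacMartinez2015} combined with the range-inclusion/adjoint-estimate lemma \cite[Appendix B, Proposition B.1]{DaPratoZabczyk1992}, which is exactly the criterion you invoke, with the adjoints identified through the pairing of (\ref{SDE0}) with the dual system (\ref{SDE'}). The only cosmetic difference is that the paper parametrizes the dual objects directly by the pairs $\left(  y,v\right)  $ of the forward-interpreted dual system rather than by terminal data $\eta$, but these parametrizations coincide.
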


The proof is quasi-identical to the duality arguments in \cite[Theorem
1]{GoreacMartinez2015} by invoking \cite[Appendix B, Proposition
B.1]{DaPratoZabczyk1992}.

In the remaining of the section, unless stated otherwise, we assume the
control matrix $B$ to be mode-independent (constant). Using the explicit
construction of BSDE with respect to marked-point processes, an invariance
(algebraic) necessary and sufficient criterion for approximate
null-controllability has been given in \cite[Theorem 6]%
{GoreacGrosuRotenstein_2016}. We recall the following invariance concepts (cf.
\cite{Curtain_86}, \cite{Schmidt_Stern_80}).

\begin{definition}
Given a linear operator $\mathcal{A\in}%
\mathbb{R}
^{N\times N}$ and a family $\mathcal{C=}\left(  \mathcal{C}_{i}\right)
_{1\leq i\leq k}\subset%
\mathbb{R}
^{N\times N}$, a set $V\subset%
\mathbb{R}
^{N}$ is said to be $\left(  \mathcal{A};\mathcal{C}\right)  $- invariant if
$\mathcal{A}V\subset V+%
{\textstyle\sum\limits_{i=1}^{k}}
\operatorname{Im}\mathcal{C}_{i}.$
\end{definition}

We construct a mode-indexed family of linear subspaces of $%
\mathbb{R}
^{N}$ denoted by $\left(  V_{\gamma}^{M,n}\right)  _{0\leq n\leq M,\text{
}\gamma\in E}$ by setting
\begin{equation}
\mathcal{A}^{\ast}\left(  \gamma\right)  :=A^{\ast}\left(  \gamma\right)
-\int_{E}\left(  C^{\ast}\left(  \gamma,\theta\right)  +I\right)
\lambda(\gamma)Q(\gamma,d\theta)\text{ and }V_{\gamma}^{M,M}=\ker B^{\ast},
\label{CalA}%
\end{equation}
for all $\gamma\in E,$ and computing, for every $0\leq n\leq M-1,$
\begin{equation}
\left.  V_{\gamma}^{M,n}\text{ the largest }\left(  \mathcal{A}^{\ast}\left(
\gamma\right)  ;\left[  \left(  C^{\ast}(\gamma,\theta)+I\right)
\Pi_{V_{\theta}^{M,n+1}}:\theta\in E,\text{ }Q\left(  \gamma,\theta\right)
>0\right]  \right)  -\text{invariant subspace of }\ker B^{\ast}.\right.
\label{V^n_spaces}%
\end{equation}
Here, $\Pi_{V}$ denotes the orthogonal projection operator onto the linear
space $V\subset%
\mathbb{R}
^{N}$. The explicit criterion is the following

\begin{theorem}
[{\cite[Theorem 6]{GoreacGrosuRotenstein_2016}}]\label{ThMain}The switch
system (\ref{SDE0}) is approximately null-controllable (in time $T>0$) with
$\gamma_{0}$ as initial mode, if and only if the generated set $V_{\gamma_{0}%
}^{M,0}$ reduces to $\left\{  0\right\}  .$
\end{theorem}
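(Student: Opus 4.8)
The plan is to deduce Theorem \ref{ThMain} from the duality criterion of Theorem \ref{dualityTh} by exploiting the finite-jump structure of the process to reduce the stochastic ``unique continuation'' statement to a finite family of deterministic geometric-control conditions. By Theorem \ref{dualityTh}, approximate null-controllability with initial mode $\gamma_0$ is equivalent to the following assertion about the controlled forward reinterpretation (\ref{SDE'}) of the dual equation: the only $y\in\mathbb{R}^N$ for which there exists an admissible $v$ keeping $B^*Y_t^{y,v}=0$ for $\mathbb{P}^{0,\gamma_0}\otimes Leb$-a.e. $(\omega,t)\in\Omega\times[0,T]$ is $y=0$. I would therefore introduce the ``maintainability'' set $N_{\gamma_0}$ of all such $y$ and aim to prove $N_{\gamma_0}=V_{\gamma_0}^{M,0}$; the theorem then follows at once.

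The core of the argument is a backward induction on the jump index $n=M,M-1,\ldots,0$. Conditioning on the mode chain $e_n$, on a between-jump interval in mode $\gamma$ the measure $q$ carries no mass, so by the form of the compensator (\ref{Def_Compensator}) the process (\ref{SDE'}) obeys the controlled ODE $\dot Y_t=-A^*(\gamma)Y_t-\lambda(\gamma)\int_E (C^*(\gamma,\theta)+I)v_t(\theta)Q(\gamma,d\theta)$, while at a jump to any mode $\theta$ with $Q(\gamma,\theta)>0$ the trajectory is shifted by $v_t(\theta)$. I would define $V_\gamma^{M,n}$ as the set of post-$n$-th-jump values in mode $\gamma$ from which the constraint $B^*Y\equiv0$ can be maintained on the remaining horizon across every branching scenario, seeded at the terminal level $n=M$ --- where no further admissible jump occurs and the requirement reduces to $\ker B^*$ --- and show this coincides with the recursively defined spaces (\ref{V^n_spaces}). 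A crucial point to establish is that these sets are independent of the current time $s\in[0,T]$, which rests on the time-homogeneity of $(\lambda,Q,A,B,C)$.

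The key algebraic step, which I expect to be the technical heart, is the identification of one induction step with a controlled-invariance condition. Since the constraint must hold for a.e. time and the potential jump time ranges over the whole interval, the admissible control must satisfy $Y_t+v_t(\theta)\in V_\theta^{M,n+1}$ at every instant and for every reachable $\theta$; writing $v_t(\theta)=\zeta_\theta(t)-Y_t$ with $\zeta_\theta(t)\in V_\theta^{M,n+1}$ and substituting into the ODE collapses the drift, via the defining relation $-A^*(\gamma)+\lambda(\gamma)\int_E (C^*(\gamma,\theta)+I)Q(\gamma,d\theta)=-\mathcal{A}^*(\gamma)$ of (\ref{CalA}), to $\dot Y_t=-\mathcal{A}^*(\gamma)Y_t-(\text{forcing in }\sum_{\theta:\,Q(\gamma,\theta)>0}\operatorname{Im}[(C^*(\gamma,\theta)+I)\Pi_{V_\theta^{M,n+1}}])$. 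Maintaining $Y_t\in\ker B^*$ under such dynamics is solvable precisely on the largest $(\mathcal{A}^*(\gamma);[\,\cdot\,])$-invariant subspace of $\ker B^*$, by the standard theory of $(\mathcal{A};\mathcal{C})$-invariant subspaces (cf. \cite{Schmidt_Stern_80}, \cite{Curtain_86}); this is exactly (\ref{V^n_spaces}), closing the induction.

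The main obstacles I anticipate are twofold. First, the measure-theoretic bookkeeping needed to pass from the a.e.-in-$(\omega,t)$ constraint to the pointwise-in-time invariance and to the ``landing'' conditions at every admissible jump, together with verifying that a maintaining control can be chosen within $\mathcal{L}^2(q;\mathbb{R}^N)$ (measurability and square-integrability). Second, the terminal step: one must treat the free evolution on the last interval carefully so that the seed $V_\gamma^{M,M}=\ker B^*$ is consistent with the constraint there, and confirm that the largest-invariant-subspace construction indeed yields the maximal maintainability set rather than merely a subset. Both the maximality (``$\supseteq$'') and the necessity (``$\subseteq$'') inclusions should follow from the geometric-control characterization, but matching them to the stochastic statement is where the care is required.
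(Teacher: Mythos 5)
This statement is quoted verbatim from \cite[Theorem 6]{GoreacGrosuRotenstein_2016}; the present paper gives no proof of it, so there is no internal argument to compare yours against. That said, your strategy is certainly the intended one: pass to the forward reinterpretation (\ref{SDE'}) of the dual equation via Theorem \ref{dualityTh}, introduce the maintainability set $N_{\gamma_0}$ of initial data from which $B^{\ast}Y^{y,v}\equiv 0$ can be sustained, and identify it with $V^{M,0}_{\gamma_0}$ by backward induction on the jump index. Your key algebraic step is correct: between jumps the compensator (\ref{Def_Compensator}) turns (\ref{SDE'}) into the ODE you write, the jump at $(t,\theta)$ shifts $Y$ by $v_t(\theta)$, and the substitution $v_t(\theta)=\zeta_\theta(t)-Y_t$ with $\zeta_\theta(t)\in V^{M,n+1}_{\theta}$ produces exactly the drift $-\mathcal{A}^{\ast}(\gamma)$ of (\ref{CalA}) together with a forcing term ranging over $\sum_{\theta:\,Q(\gamma,\theta)>0}\operatorname{Im}\left[\left(C^{\ast}(\gamma,\theta)+I\right)\Pi_{V^{M,n+1}_{\theta}}\right]$, so that one induction step reduces to the classical largest-controlled-invariant-subspace computation (\ref{V^n_spaces}).

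The proposal remains a plan with two self-acknowledged holes, and the terminal one is not mere bookkeeping. After $T_M$ the compensator puts no mass on $\left[0,T\right]\times E$ and no further jump occurs, so the dual state follows the uncontrolled flow $\dot Y=-A^{\ast}(\Gamma^{\gamma_0}_{T_M})Y$; maintaining $B^{\ast}Y\equiv 0$ on the (positive-probability) event $T_M<T$ forces $Y_{T_M}$ into the largest $A^{\ast}(\gamma)$-invariant subspace of $\ker B^{\ast}$, which is in general strictly smaller than the seed $V^{M,M}_{\gamma}=\ker B^{\ast}$ of (\ref{CalA}). Since enlarging $V^{M,n+1}_{\theta}$ can only enlarge the computed $V^{M,n}_{\gamma}$, your induction with that seed yields a priori only $N_{\gamma_0}\subseteq V^{M,0}_{\gamma_0}$, i.e. the ``if'' direction of the theorem; the ``only if'' direction needs $V^{M,0}_{\gamma_0}\subseteq N_{\gamma_0}$, which your argument does not deliver whenever $\ker B^{\ast}$ fails to be $A^{\ast}(\gamma)$-invariant for some reachable terminal mode. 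You must either show that this discrepancy is absorbed by level $0$, or handle level $M$ by the same invariance computation (note that Example \ref{ExpNon0Ctrl} tabulates $V^{M,M}_{e^1}=\{0\}$ rather than $\ker B^{\ast}$, suggesting that this is what is actually meant), or otherwise justify the seed as written. The first hole --- disintegrating the $\mathbb{P}^{0,\gamma_0}\otimes Leb$-a.e.\ constraint along the conditional law of $(T_{n+1},\Gamma_{T_{n+1}}^{\gamma_0})$ to obtain the landing condition for a.e.\ $t$ and every $\theta$ with $Q(\gamma,\theta)>0$, the linearity and time-independence of the maintainability sets, and the admissibility in $\mathcal{L}^{2}\left(q;\mathbb{R}^{N}\right)$ of the feedback-constructed maintaining control --- is standard but must be carried out through the explicit representation of Section \ref{SubsTechnicalPrelim}.
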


In the same paper \cite{GoreacGrosuRotenstein_2016}, the property of
approximate null-controllability for general systems is shown (using
convenient examples) to be strictly weaker than approximate controllability.
The following sufficient criterion is proven to guarantee the approximate controllability.

\begin{proposition}
[{\cite[Condition 10]{GoreacGrosuRotenstein_2016}}]%
\label{SuffConditionAppCtrl}Let us assume that the largest 

$\left(  \mathcal{A}^{\ast}\left(  \gamma\right)  ;\left[  \left(  C^{\ast
}(\gamma,\theta)+I\right)  \Pi_{\ker B^{\ast}}:Q\left(  \gamma,\theta\right)
>0\right]  \right)  $-invariant subspace of $\ker B^{\ast}$ is reduced to
$\left\{  0\right\}  $, for every $\gamma\in E.$ Then, for every $T>0$ and
every $\gamma_{0}\in E$, the system (\ref{SDE0}) is approximately controllable
in time $T>0.$
\end{proposition}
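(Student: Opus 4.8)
The plan is to verify the duality criterion for approximate controllability from Theorem \ref{dualityTh}: it suffices to show that any solution $\left(Y^{T,\xi},Z^{T,\xi}\right)$ of the dual system (\ref{BSDE0}) satisfying $Y_t^{T,\xi}\in\ker B^{\ast}$ for $\mathbb{P}^{0,\gamma_{0}}\otimes Leb$-almost every $\left(\omega,t\right)$ must in fact vanish identically. (Since $B$ is constant, $\ker B^{\ast}\left(\Gamma_t^{\gamma_0}\right)=\ker B^{\ast}$ is a fixed subspace.) To exploit the hypothesis I would first reduce (\ref{BSDE0}) to its deterministic inter-jump form, following the parametrization of Section \ref{SubsTechnicalPrelim}: represent $Y$ through the family $y^{n}\left(e,\cdot\right)$ and $Z$ through $z^{n}\left(e,\cdot,\cdot\right)$. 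On the interior of an inter-jump interval the mode is frozen at $\gamma=\gamma_{\left\vert e\right\vert}$, the counting measure $q$ vanishes so $\widetilde{q}=-\widehat{q}$, and (\ref{BSDE0}) collapses, via the form of the compensator (\ref{Def_Compensator}), to the linear ODE
\[
\dot{Y}_t=-A^{\ast}\left(\gamma\right)Y_t-\lambda\left(\gamma\right)\int_{E}\left(C^{\ast}\left(\gamma,\theta\right)+I\right)Z_t\left(\theta\right)Q\left(\gamma,d\theta\right).
\]

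Next I would recast this ODE through the operator $\mathcal{A}^{\ast}\left(\gamma\right)$ of (\ref{CalA}). Writing $\widehat{Y}_t^{\theta}:=Y_{t^{-}}+Z_t\left(\theta\right)$ for the value $Y$ would assume on an instantaneous jump into mode $\theta$, and substituting $A^{\ast}\left(\gamma\right)=\mathcal{A}^{\ast}\left(\gamma\right)+\int_{E}\left(C^{\ast}\left(\gamma,\theta\right)+I\right)\lambda\left(\gamma\right)Q\left(\gamma,d\theta\right)$, the drift becomes
\[
\dot{Y}_t=-\mathcal{A}^{\ast}\left(\gamma\right)Y_t-\lambda\left(\gamma\right)\int_{E}\left(C^{\ast}\left(\gamma,\theta\right)+I\right)\widehat{Y}_t^{\theta}Q\left(\gamma,d\theta\right).
\]
Because $Y$ is continuous on each inter-jump interval and $\ker B^{\ast}$ is closed, the a.e. constraint upgrades to $Y_t\in\ker B^{\ast}$ for \emph{every} $t$ in the interval, whence also $\dot{Y}_t\in\ker B^{\ast}$.

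The decisive step is to show that the virtual post-jump values respect the same constraint, i.e. $\widehat{Y}_t^{\theta}\in\ker B^{\ast}$ for every $\theta$ with $Q\left(\gamma,\theta\right)>0$. Any such $\theta$ is reached with positive probability at the next jump, so $\widehat{Y}_t^{\theta}$ is an actual post-jump trajectory value $y^{n+1}\left(e\oplus\left(t,\theta\right),\cdot\right)$ evaluated along a mark of positive law; the $\mathbb{P}^{0,\gamma_{0}}\otimes Leb$-a.e. constraint therefore forces it into $\ker B^{\ast}$, through the concatenation relation (\ref{Def_e_Concatenation}). Consequently $\left(C^{\ast}\left(\gamma,\theta\right)+I\right)\widehat{Y}_t^{\theta}\in\operatorname{Im}\left[\left(C^{\ast}\left(\gamma,\theta\right)+I\right)\Pi_{\ker B^{\ast}}\right]$, and the ODE yields, for $V_{\gamma}:=\mathrm{span}\left\{Y_t:\Gamma_t^{\gamma_0}=\gamma\right\}\subset\ker B^{\ast}$,
\[
\mathcal{A}^{\ast}\left(\gamma\right)V_{\gamma}\subset V_{\gamma}+\sum_{\theta:\,Q\left(\gamma,\theta\right)>0}\operatorname{Im}\left[\left(C^{\ast}\left(\gamma,\theta\right)+I\right)\Pi_{\ker B^{\ast}}\right],
\]
using that $\dot{Y}_t\in V_{\gamma}$ as a limit of difference quotients of mode-$\gamma$ values. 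Thus $V_{\gamma}$ is $\left(\mathcal{A}^{\ast}\left(\gamma\right);\left[\left(C^{\ast}\left(\gamma,\theta\right)+I\right)\Pi_{\ker B^{\ast}}\right]\right)$-invariant inside $\ker B^{\ast}$, hence contained in the largest such subspace, which is $\left\{0\right\}$ by assumption. Therefore $Y_t=0$ whenever $\Gamma_t^{\gamma_0}=\gamma$; ranging over $\gamma\in E$ gives $Y\equiv0$, and Theorem \ref{dualityTh} delivers approximate controllability.

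I expect the main obstacle to be precisely this decisive step: rigorously promoting the pathwise a.e. constraint to the statement that \emph{every} virtual post-jump value $\widehat{Y}_t^{\theta}$ lies in $\ker B^{\ast}$ for all $\theta$ in the support of $Q\left(\gamma,\cdot\right)$. This requires the explicit measurable parametrization of Section \ref{SubsTechnicalPrelim} together with a careful argument that the law of the marks charges every admissible concatenation $e\oplus\left(t,\theta\right)$ with $Q\left(\gamma,\theta\right)>0$, so that negligible exceptional sets cannot conceal a post-jump violation of the constraint. Note also that, unlike the iterative spaces $V_{\gamma}^{M,n}$ governing null-controllability in Theorem \ref{ThMain}, here the single, non-iterated projection $\Pi_{\ker B^{\ast}}$ suffices, because approximate controllability demands $Y\equiv0$ on all intervals simultaneously rather than merely $Y_{0}=0$.
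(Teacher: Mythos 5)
Your argument is correct, and there is nothing in this paper to compare it against: Proposition \ref{SuffConditionAppCtrl} is recalled from \cite{GoreacGrosuRotenstein_2016} (as ``Condition 10'') and the present paper gives no proof of it. Your route is the natural one and is consistent with the machinery the paper does set up: the duality criterion of Theorem \ref{dualityTh}, the inter-jump deterministic parametrization of Section \ref{SubsTechnicalPrelim}, and the invariance-subspace formalism of (\ref{CalA})--(\ref{V^n_spaces}). The reduction of (\ref{BSDE0}) to the ODE $\dot{Y}_t=-\mathcal{A}^{\ast}(\gamma)Y_t-\lambda(\gamma)\int_{E}(C^{\ast}(\gamma,\theta)+I)\widehat{Y}_t^{\theta}\,Q(\gamma,d\theta)$ is exact (between jumps $\widetilde{q}=-\widehat{q}$ and the jump of $Y$ with mark $\theta$ is $Z(\theta)$, so $\widehat{Y}_t^{\theta}=Y_{t-}+Z_t(\theta)$ is indeed the post-jump value $y^{n+1}(e\oplus(t,\theta),t)$), and the ``decisive step'' you flag does go through: the conditional law of $(T_{n+1},\Gamma_{T_{n+1}})$ has density $\lambda(\gamma)Q(\gamma,d\theta)$ on $[\,|e|,T]\times E$, so the a.e.\ constraint propagates to $y^{n+1}(e\oplus(t,\theta),\cdot)\in\ker B^{\ast}$ for a.e.\ $t$ and every $\theta$ charged by $Q(\gamma,\cdot)$, which is all the ODE needs. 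Two small points you should make explicit. First, $V_{\gamma}$ must be defined as the span of the deterministic values $y^{n}(e,t)$ over marks $e$ in the support of the law of $e_{n}$ with $\gamma_{|e|}=\gamma$, so that it is a genuine (deterministic) subspace and null sets cannot contaminate it. Second, on the terminal regime after $T_{M}$ the compensator (\ref{Def_Compensator}) charges only the cemetery state, so there are no virtual post-jump values and the equation degenerates to $\dot{Y}_t=-A^{\ast}(\gamma)Y_t$; the span $W$ of these values then satisfies $A^{\ast}(\gamma)W\subset W\subset\ker B^{\ast}$, and since $\mathcal{A}^{\ast}(\gamma)w=A^{\ast}(\gamma)w-\lambda(\gamma)\int_{E}(C^{\ast}(\gamma,\theta)+I)\Pi_{\ker B^{\ast}}w\,Q(\gamma,d\theta)$ for $w\in\ker B^{\ast}$, $W$ is still $\bigl(\mathcal{A}^{\ast}(\gamma);[(C^{\ast}(\gamma,\theta)+I)\Pi_{\ker B^{\ast}}]\bigr)$-invariant and hence zero. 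With these two remarks added, the proof is complete.
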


\section{A Backward Stochastic Riccati Equation Approach to Exact Null-Controllability}

\subsection{A Riccati Equation\label{Sec3.1}}

A simple look at \cite[Remark 4]{GoreacMartinez2015} shows that a key argument
in the analysis of controllability properties resides in a family of backward
stochastic Riccati equations. The authors of \cite[Remark 4]%
{GoreacMartinez2015} argue that their analysis is limited by solvability of
the general BSDE of the form%
\begin{equation}
\left\{
\begin{array}
[c]{l}%
dK_{t}^{\varepsilon,\mathcal{B}}=\left(  K_{t}^{\varepsilon,\mathcal{B}%
}A^{\ast}\left(  \Gamma_{t}^{\gamma_{0}}\right)  +A\left(  \Gamma_{t}%
^{\gamma_{0}}\right)  K_{t}^{\varepsilon,\mathcal{B}}-\mathcal{B}\left(
\Gamma_{t}^{\gamma_{0}}\right)  \right)  dt+\int_{E}H_{t}^{\varepsilon
,\mathcal{B}}\left(  \theta\right)  q\left(  dt,d\theta\right) \\
\text{ \ \ \ \ \ \ }+\int_{E}\left[  \left(  f_{t}^{\varepsilon,\mathcal{B}%
}\left(  \theta\right)  \right)  ^{\ast}g_{t}^{\varepsilon,\mathcal{B}}\left(
\theta\right)  f_{t}^{\varepsilon,\mathcal{B}}\left(  \theta\right)
-H_{t}^{\varepsilon,\mathcal{B}}\left(  \theta\right)  \right]  \widehat{q}%
\left(  dt,d\theta\right)  ,\\
\text{where }f_{t}^{\varepsilon,\mathcal{B}}\left(  \theta\right)  :=\left(
C\left(  \Gamma_{t}^{\gamma_{0}},\theta\right)  K_{t}^{\varepsilon
,\mathcal{B}}-H_{t}^{\varepsilon,\mathcal{B}}\left(  \theta\right)  \right)
\text{ and }g_{t}^{\varepsilon,\mathcal{B}}\left(  \theta\right)  :=\left(
\varepsilon I+K_{t}^{\varepsilon,\mathcal{B}}+H_{t}^{\varepsilon,\mathcal{B}%
}\left(  \theta\right)  \right)  ^{-1},\text{ }t\in\left[  0,T\right]  .\\
K_{t}^{\varepsilon,\mathcal{B}}=0,\text{ }\varepsilon I+K_{t}^{\varepsilon
,\mathcal{B}}+H_{t}^{\varepsilon,\mathcal{B}}\left(  \theta\right)  >0,\text{
for almost all }t\in\left[  0,T\right]  .
\end{array}
\right.  \label{RicBSDE}%
\end{equation}
Here, $\mathcal{B}\left(  \Gamma_{t}^{\gamma_{0}}\right)  $ are positive
semi-definite matrix. However, by using the structure of the jumps and
inspired by \cite{CFJ_2014}, existence of the solution of the previous BSDE
will be reduced to a family of itterated (classical) Riccati equations.

The first result gives existence and uniqueness for the solution of the
previous equation. Before stating and proving this result, let us concentrate
on the specific form of the jump contribution $H$. We consider a
c\`{a}dl\`{a}g process $K^{\varepsilon,\mathcal{B}}$ continuous except, maybe,
at switching times $T_{n}$. Then, as explained before, this can be identified
with a family $\left(  k^{n,\varepsilon,\mathcal{B}}\right)  .$ We construct,
for every $n\geq0,$%
\begin{equation}
\widehat{k}^{n+1,\varepsilon,\mathcal{B}}\left(  e,t,\gamma\right)
:=k^{n+1,\varepsilon,\mathcal{B}}\left(  e\oplus\left(  t,\gamma\right)
,t\right)  1_{\left\vert e\right\vert <t} \label{yhatn+1}%
\end{equation}
and $K_{t}^{\varepsilon,\mathcal{B}}$ can be obtained by simple integration of
the previous quantity with respect to the conditional law of $\left(
T_{n+1},\Gamma_{T_{n+1}}^{\gamma_{0}}\right)  $ knowing $\mathcal{F}_{T_{n}}.$
Then, $H$ is simply given by $h^{n,\varepsilon,\mathcal{B}}\left(
e,t,\gamma\right)  :=\widehat{k}^{n+1,\varepsilon,\mathcal{B}}\left(
e,t,\gamma\right)  -k^{n,\varepsilon,\mathcal{B}}\left(  e,t\right)  .$

The main theoretical contribution of the subsection is the following.

\begin{theorem}
\label{ThRiccati}We assume that $\mathbb{P}^{0,\gamma_{0}}\left(
T_{M+1}=\infty\right)  =1$, for some $M\geq1$. For every $\varepsilon>0$ and
every $T>0$, the Riccati BSDE (\ref{RicBSDE}) admits a unique solution
$\left(  K_{\cdot}^{\varepsilon,\mathcal{B}},H_{\cdot}^{\varepsilon
,\mathcal{B}}\left(  \cdot\right)  \right)  $ consisting of an $\mathcal{S}%
_{+}^{N}$-valued (i.e. positive semi-definite) c\`{a}dl\`{a}g process
$K_{\cdot}^{\varepsilon,\mathcal{B}}$ continuous everywhere except, maybe, at
jump times and an $\mathcal{S}^{N}$-valued (i.e. symmetric matrix-valued)
$\mathbb{F}$-predictable process $H^{\varepsilon}$.
\end{theorem}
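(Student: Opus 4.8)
The strategy is to exploit the piecewise structure imposed by the assumption $\mathbb{P}^{0,\gamma_{0}}(T_{M+1}=\infty)=1$: because the marked point process is concentrated at the cemetery state after $T_M$, the stochastic Riccati BSDE (\ref{RicBSDE}) decouples into a backward-in-$n$ family of \emph{deterministic} Riccati ODEs, one for each mode-trajectory sheet $e\in\overline{E}_{T,n}$. Following the philosophy of \cite[Assumption (2.17)]{CFJ_2014}, I would identify the c\`{a}dl\`{a}g process $K^{\varepsilon,\mathcal{B}}$ with its family of deterministic functions $\left(k^{n,\varepsilon,\mathcal{B}}\right)_{0\leq n\leq M}$ via (\ref{Def_Cadlag}), and likewise read off $H$ from the jump contribution through (\ref{yhatn+1}) together with $h^{n}=\widehat{k}^{n+1}-k^{n}$. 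The proof is then an induction running \emph{downward} from $n=M$ to $n=0$.

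\textbf{Base step.}
On the last sheet $n=M$, the assumption forces $\widehat{q}^{M}$ to be the Dirac mass $\delta_{\overline\gamma}\delta_{\infty}$ (see (\ref{Def_Compensator})), so $\widehat{k}^{M+1,\varepsilon,\mathcal{B}}=0$ and hence $H$ carries no further jump contribution beyond the cemetery. Substituting $h^{M}=-k^{M}$ into (\ref{RicBSDE}) collapses the $\widehat{q}$-integral and the $q$-integral, leaving a purely deterministic matrix Riccati ODE on each $e$ with $|e|<\infty$, of the schematic form
\begin{equation}
\dot k^{M}=k^{M}A^{\ast}+A\,k^{M}-\mathcal{B}+\text{(Riccati quadratic in }k^{M}\text{)},\qquad k^{M}\big|_{t=T}=0.\notag
\end{equation}
Here I would verify that the Riccati quadratic term, built from $f$ and $g=(\varepsilon I+k^{M}+h^{M})^{-1}=(\varepsilon I)^{-1}$ on this sheet, is well defined precisely because $\varepsilon>0$ keeps $g$ bounded and the constraint $\varepsilon I+K+H>0$ is satisfiable.

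\textbf{Inductive step and the main obstacle.}
Assume $k^{n+1,\varepsilon,\mathcal{B}}$ is constructed, positive semi-definite, and continuous on each sheet. Then $\widehat{k}^{n+1}$ in (\ref{yhatn+1}) is a known datum obtained by integrating against the conditional law of $(T_{n+1},\Gamma_{T_{n+1}}^{\gamma_0})$; substituting $h^{n}=\widehat{k}^{n+1}-k^{n}$ turns the $n$-th equation into another deterministic Riccati ODE for $k^{n}$ with a known inhomogeneity and terminal condition $k^{n}|_{t=T}=0$. I would then invoke standard existence/uniqueness and monotonicity theory for symmetric matrix Riccati equations (of the linear-quadratic regulator type) to obtain a unique global $\mathcal{S}_+^N$-valued solution on $[0,T]$; global (non-explosion) existence follows because the associated LQ cost is finite and $\mathcal{B}\geq0$, and positive semi-definiteness follows from the comparison/representation of the Riccati solution as a value function. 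I expect the \emph{main difficulty} to be twofold: first, checking that the invertibility constraint $g=(\varepsilon I+K+H)^{-1}$ with $\varepsilon I+K+H>0$ is propagated by the induction — i.e. that $\varepsilon I+k^{n}+\widehat{k}^{n+1}-k^{n}=\varepsilon I+\widehat{k}^{n+1}$ stays strictly positive, which reduces to $\widehat{k}^{n+1}\geq0$, secured by the inductive positivity of $k^{n+1}$; and second, confirming that the reconstructed $\left(K^{\varepsilon,\mathcal{B}},H^{\varepsilon,\mathcal{B}}\right)$ genuinely satisfies the original stochastic equation (\ref{RicBSDE}) and not merely its sheetwise projections, which requires reassembling the pieces through (\ref{Def_Cadlag})--(\ref{Def_Predictable}) and verifying that the $\widehat q$- and $q$-integrals recombine correctly — this is where the bookkeeping of the compensator (\ref{Def_Compensator}) must be handled carefully. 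Uniqueness then follows by reversing the reduction: any solution of (\ref{RicBSDE}) must have sheetwise restrictions solving the same deterministic Riccati ODEs, whose solutions are unique.
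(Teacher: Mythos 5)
Your proposal follows essentially the same route as the paper's proof: identify $(K^{\varepsilon,\mathcal{B}},H^{\varepsilon,\mathcal{B}})$ with the sheetwise deterministic family $(k^{n,\varepsilon,\mathcal{B}})$ in the spirit of \cite{CFJ_2014}, note that $\varepsilon I+K+H=\varepsilon I+\widehat{k}^{n+1}$ so that invertibility is propagated by the positive semi-definiteness of the previously constructed level, and solve the resulting deterministic matrix Riccati equations by descending recursion on $n$ via standard LQ theory (the paper makes this explicit by rewriting the level-$n$ equation in the canonical form with $\Pi\geq 0$, $r\gg 0$ and citing the classical solvability result). The only cosmetic divergence is at the top level $n=M$, where the paper simply sets $k^{M,\varepsilon}\equiv 0$ while you solve the terminal ODE (which, since $\widehat{q}^{M}$ charges nothing on $[0,T]$, is in fact quadratic-free); this does not affect the argument.
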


\begin{proof}
For notation purposes, we will consider $\mathcal{B}$ to be fixed and drop the
dependency on $\mathcal{B}$. The proof consists of two steps.\newline%
\underline{Step 1.} Using the previous structure of the candidate to the
solution of the Riccati equation, one gets an equivalent system of ordinary
Riccati-type differential equations%
\[
\left\{
\begin{array}
[c]{l}%
k^{M,\varepsilon}\left(  \cdot\right)  =0,k^{n,\varepsilon}\left(
e_{n},T\right)  =0,\text{ for every }0\leq n\leq M-1,\\
dk^{n,\varepsilon}\left(  e_{n},t\right)  =\left[  k^{n,\varepsilon}\left(
e_{n},t\right)  A^{\ast}\left(  \gamma_{\left\vert e_{n}\right\vert }\right)
+A\left(  \gamma_{\left\vert e_{n}\right\vert }\right)  k^{n,\varepsilon
}\left(  e_{n},t\right)  -\mathcal{B}\left(  \gamma_{\left\vert e_{n}%
\right\vert }\right)  \right]  dt\\
+\int_{E}\left[  \left(  f^{n}\left(  e_{n},t,\theta\right)  \right)  ^{\ast
}g^{n}\left(  e_{n},t,\theta\right)  f^{n}\left(  e_{n},t,\theta\right)
\right]  \widehat{q}\left(  dt,d\theta\right)  -\int_{E}\left[  \widehat{k}%
^{n+1,\varepsilon}\left(  e_{n},t,\theta\right)  -k^{n,\varepsilon}\left(
e_{n},t\right)  \right]  \widehat{q}\left(  dt,d\theta\right)  ,\\
\text{for }t\in\left[  0,T\right]  ,\text{ where }f^{n}\left(  e_{n}%
,t,\theta\right)  =\left(  C\left(  \gamma_{\left\vert e_{n}\right\vert
},\theta\right)  +I\right)  k^{n,\varepsilon}\left(  e_{n},t\right)
-\widehat{k}^{n+1,\varepsilon}\left(  e_{n},t,\theta\right)  \text{ and }\\
g^{n}\left(  e_{n},t,\theta\right)  :=\left(  \varepsilon I+k^{n+1,\varepsilon
}\left(  e_{n}\oplus\left(  t,\gamma\right)  ,t\right)  \right)
^{-1}1_{\left\vert e\right\vert <t},\\
\text{Under the condition that }k^{n,\varepsilon}\left(  e_{n},t\right)
\geq0,\text{ for almost all }t\in\left[  0,T\right]  .
\end{array}
\right.
\]
The fact that the two systems are indeed equivalent follow from the same
arguments as those in \cite[Theorem 2]{CFJ_2014}. \underline{Step 2.} Thus,
solvability of the Riccati backward stochastic equation reduces to the
solvability of the previous system or, again, to the solvability (in
$\mathcal{S}_{+}^{N}$) of the following equation%
\[
\left.  \overset{\cdot}{p}\left(  t\right)  =p\left(  t\right)  a+a^{\ast
}p\left(  t\right)  -\Pi+p\left(  t\right)  \int_{E}\left[  b\left(
\theta\right)  r_{t}^{-1}\left(  \theta\right)  b^{\ast}\left(  \theta\right)
\right]  \nu\left(  d\theta\right)  p\left(  t\right)  ,\text{ for }%
t\in\left[  t_{0},T\right]  ,\text{ }p\left(  T\right)  =\varepsilon
I.\right.
\]
by setting, for a fixed $e_{n}$ (and $t>t_{0}:=\left\vert e_{n}\right\vert $)
\begin{align*}
a &  :=A^{\ast}\left(  \gamma_{\left\vert e_{n}\right\vert }\right)
-\lambda\left(  \gamma_{\left\vert e_{n}\right\vert }\right)  \left[
\begin{array}
[c]{c}%
\frac{1}{2}I+\int_{E}C^{\ast}\left(  \gamma_{\left\vert e_{n}\right\vert
},\theta\right)  Q\left(  \gamma_{\left\vert e_{n}\right\vert },d\theta
\right)  \\
-\varepsilon\int_{E}\left(  C^{\ast}\left(  \gamma_{\left\vert e_{n}%
\right\vert },\theta\right)  +I\right)  \left(  \varepsilon
I+k^{n+1,\varepsilon}\left(  e_{n}\oplus\left(  t,\theta\right)  ,t\right)
\right)  ^{-1}Q\left(  \gamma_{\left\vert e_{n}\right\vert },d\theta\right)
\end{array}
\right]  ,\\
\Pi &  :=\mathcal{B}\left(  \gamma_{\left\vert e_{n}\right\vert }\right)
+\varepsilon\int_{E}\left(  \varepsilon I+k^{n+1,\varepsilon}\left(
e_{n}\oplus\left(  t,\theta\right)  ,t\right)  \right)  ^{-1}%
k^{n+1,\varepsilon}\left(  e_{n}\oplus\left(  t,\theta\right)  ,t\right)
Q\left(  \gamma_{\left\vert e_{n}\right\vert },d\theta\right)  \\
b\left(  \theta\right)   &  :=C^{\ast}\left(  \gamma_{\left\vert
e_{n}\right\vert },\theta\right)  +I,\text{ }r_{t}\left(  \theta\right)
:=\left(  \varepsilon I+k^{n+1,\varepsilon}\left(  e_{n}\oplus\left(
t,\theta\right)  ,t\right)  \right)  \text{ and }\nu\left(  d\theta\right)
=\lambda\left(  \gamma_{\left\vert e_{n}\right\vert }\right)  Q\left(
\gamma_{\left\vert e_{n}\right\vert },d\theta\right)
\end{align*}
Existence and uniqueness for this equation is standard. Indeed, one notes that
$\Pi\geq0$ and $r\gg0$ (provided that $k^{n+1,\varepsilon}\geq0$)$.$ If $E$
reduces to a singletone, then this is the classical equation for deterministic
control problems (see \cite[Chapter 6, Equation 2.34]{yong_zhou_99}). The
existence and uniqueness is guaranteed by \cite[Chapter 6, Corollary
2.10]{yong_zhou_99}. For the general case, one assumes that $E$ is given by
the standard basis of $%
\mathbb{R}
^{p}$ and works with
\begin{align*}
b &  =\left(  \sqrt{\nu\left(  e^{1}\right)  }b\left(  e^{1}\right)
,...,\sqrt{\nu\left(  e^{p}\right)  }b\left(  e^{p}\right)  \right)  \text{
and }\\
r_{t} &  :=\left(
\begin{array}
[c]{cccc}%
r_{t}\left(  e^{1}\right)   & 0 & ... & 0\\
0 & r_{t}\left(  e^{2}\right)   & ... & 0\\
... & ... & ... & ...\\
0 & 0 & ... & r_{t}\left(  e^{p}\right)
\end{array}
\right)  \geq\varepsilon I\gg0.
\end{align*}
The proof is complete by descending recurrence over $n\leq M.$
\end{proof}

\subsection{First Application: Null-Controllability Metric(s)\label{Sec3.2}}

\begin{proposition}
A necessary and sufficient condition for exact null-controllability of
(\ref{SDE0}) with initial mode $\gamma_{0}\in E$ at time $T>0$ is that the
pseudonorm
\begin{equation}
\left.
\mathbb{R}
^{N}\ni y\longmapsto p\left(  y\right)  ,\text{ where }p^{2}\left(  y\right)
:=\inf_{v\in\mathcal{L}^{2}\left(  q;%
\mathbb{R}
^{N}\right)  }\mathbb{E}^{0,\gamma_{0}}\left[  \int_{0}^{T}\left\vert
\Pi_{\left(  \ker B^{\ast}\left(  \Gamma_{t}^{\gamma_{0}}\right)  \right)
^{\bot}}\left(  Y_{t}^{y,v}\right)  \right\vert ^{2}dt\right]  \right.
\label{pMetric}%
\end{equation}
be a norm on $%
\mathbb{R}
^{N}.$
\end{proposition}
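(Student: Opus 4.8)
The plan is to read the displayed pseudonorm in \eqref{pMetric} as a repackaging of the observability inequality of Proposition \ref{Exact0CtrlProp}, and then to exploit that on the finite-dimensional space $\mathbb{R}^N$ a seminorm that happens to be positive-definite is automatically comparable to the Euclidean norm. First I would record that $p$ really is a seminorm. Because the map $(y,v)\longmapsto Y_\cdot^{y,v}$ solving \eqref{SDE'} is linear, one has $Y^{\alpha y,\alpha v}=\alpha Y^{y,v}$ and $Y^{y_1+y_2,v_1+v_2}=Y^{y_1,v_1}+Y^{y_2,v_2}$; homogeneity and subadditivity of $p$ follow from the corresponding properties of the $\mathbb{L}^2(\mathbb{P}^{0,\gamma_0}\otimes Leb)$-seminorm $v\longmapsto \big(\mathbb{E}^{0,\gamma_0}[\int_0^T|\Pi_{(\ker B^\ast)^\bot}(Y_t^{y,v})|^2\,dt]\big)^{1/2}$ after taking infima over the admissible controls (the infimum being finite, as $v=0$ already gives a finite value). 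Hence $p$ is a norm if and only if it is positive-definite, i.e. $p(y)>0$ for every $y\neq 0$.

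Second, I would interchange the projection in \eqref{pMetric} with the operator $B^\ast$ of Proposition \ref{Exact0CtrlProp}. For each fixed mode $\gamma$, the restriction of $B^\ast(\gamma)$ to $(\ker B^\ast(\gamma))^\bot$ is injective, hence bounded below by its smallest nonzero singular value, so there are constants $0<c\le C$ with $c\,|\Pi_{(\ker B^\ast(\gamma))^\bot}z|^2\le |B^\ast(\gamma)z|^2\le C\,|\Pi_{(\ker B^\ast(\gamma))^\bot}z|^2$ for all $z\in\mathbb{R}^N$; since $E$ is finite these constants may be chosen uniformly in $\gamma$. Integrating along $Y^{y,v}$ and taking expectations shows that $\mathbb{E}^{0,\gamma_0}[\int_0^T|B^\ast(\Gamma_t^{\gamma_0})Y_t^{y,v}|^2\,dt]$ and the functional under the infimum defining $p^2(y)$ are equivalent, uniformly in $y$ and $v$.

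For necessity, I would assume exact null-controllability and invoke Proposition \ref{Exact0CtrlProp} to obtain $C_T>0$ with $|y|^2\le C_T\,\mathbb{E}^{0,\gamma_0}[\int_0^T|B^\ast(\Gamma_t^{\gamma_0})Y_t^{y,v}|^2\,dt]$ for every $v$; combining this with the upper bound $|B^\ast z|^2\le C\,|\Pi z|^2$ and taking the infimum over $v$ yields $|y|^2\le C_T C\,p^2(y)$, so $p(y)>0$ whenever $y\neq 0$ and $p$ is a norm. For sufficiency, which I expect to be the genuine crux, I would use that $p$, being a seminorm that is assumed to be a norm, is by the equivalence of all norms on the finite-dimensional space $\mathbb{R}^N$ comparable to $|\cdot|$: there is $\kappa>0$ with $|y|^2\le \kappa\,p^2(y)$. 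Since for each fixed $v$ the quantity $\mathbb{E}^{0,\gamma_0}[\int_0^T|\Pi_{(\ker B^\ast)^\bot}Y_t^{y,v}|^2\,dt]$ dominates its own infimum $p^2(y)$, the lower bound $c\,|\Pi z|^2\le |B^\ast z|^2$ gives $|y|^2\le \kappa\,p^2(y)\le \tfrac{\kappa}{c}\,\mathbb{E}^{0,\gamma_0}[\int_0^T|B^\ast(\Gamma_t^{\gamma_0})Y_t^{y,v}|^2\,dt]$ for every $v$, which is exactly the inequality of Proposition \ref{Exact0CtrlProp} with $C_T=\kappa/c$; hence the system is exactly null-controllable. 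The only nontrivial point is the upgrade from ``$p$ is a norm'' to a quantitative coercivity bound, and this is furnished for free by the finite-dimensionality of $\mathbb{R}^N$.
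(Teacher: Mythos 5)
Your proof is correct and follows essentially the same route as the paper: establish that $p$ is a seminorm via linearity of $(y,v)\mapsto Y^{y,v}$, then obtain necessity directly from Proposition \ref{Exact0CtrlProp} and sufficiency from the equivalence of norms on the finite-dimensional space $\mathbb{R}^{N}$ combined with Proposition \ref{Exact0CtrlProp}. The only addition is that you explicitly justify the two-sided comparison between $\left\vert \Pi_{\left(\ker B^{\ast}\left(\gamma\right)\right)^{\bot}}z\right\vert$ and $\left\vert B^{\ast}\left(\gamma\right)z\right\vert$ (uniform in $\gamma$ since $E$ is finite), a step the paper leaves implicit; this is a welcome clarification, not a departure.
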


\begin{proof}
It is clear that the application $p$ has non-negative values. Homogeneity is a
consequence of the equality $Y_{\cdot}^{ay,av}=aY_{\cdot}^{y,v},$ for all
$y\in%
\mathbb{R}
^{N},$ all $a\in%
\mathbb{R}
$ and all $v\in\mathcal{L}^{2}\left(  q;%
\mathbb{R}
^{N}\right)  $ (due to the linearity of (\ref{SDE'})). To prove the
subadditivity, one simply notes $Y_{\cdot}^{y_{1}+y_{2},v^{1}+v^{2}}=Y_{\cdot
}^{y_{1},v^{1}}+Y_{\cdot}^{y_{2},v^{2}},$ for all $y^{1},y^{2}\in%
\mathbb{R}
^{N}$ and all $v^{1},v^{2}\in\mathcal{L}^{2}\left(  q;%
\mathbb{R}
^{N}\right)  .$ It follows that
\begin{align*}
&  p\left(  y_{1}+y_{2}\right)  \leq\left(  \mathbb{E}^{0,\gamma_{0}}\left[
\int_{0}^{T}\left\vert \Pi_{\left(  \ker B^{\ast}\left(  \Gamma_{t}%
^{\gamma_{0}}\right)  \right)  ^{\bot}}\left(  Y_{t}^{y_{1}+y_{2},v^{1}+v^{2}%
}\right)  \right\vert ^{2}dt\right]  \right)  ^{\frac{1}{2}}\\
&  \leq\left(  \mathbb{E}^{0,\gamma_{0}}\left[  \int_{0}^{T}\left\vert
\Pi_{\left(  \ker B^{\ast}\left(  \Gamma_{t}^{\gamma_{0}}\right)  \right)
^{\bot}}\left(  Y_{t}^{y_{1},v^{1}}\right)  \right\vert ^{2}dt\right]
\right)  ^{\frac{1}{2}}+\left(  \mathbb{E}^{0,\gamma_{0}}\left[  \int_{0}%
^{T}\left\vert \Pi_{\left(  \ker B^{\ast}\left(  \Gamma_{t}^{\gamma_{0}%
}\right)  \right)  ^{\bot}}\left(  Y_{t}^{y_{2},v^{2}}\right)  \right\vert
^{2}dt\right]  \right)  ^{\frac{1}{2}},
\end{align*}
for all $v^{1},v^{2}\in\mathcal{L}^{2}\left(  q;%
\mathbb{R}
^{N}\right)  .$ The conclusion follows by taking infimum over such control
processes. It follows that $p$ is a pseudonorm (independently of the fact that
the system is approximately null-controllable). Necessity follows from
Proposition \ref{Exact0CtrlProp} and sufficiency from the equivalence of norms
on $%
\mathbb{R}
^{N}$ by applying Proposition \ref{Exact0CtrlProp}.
\end{proof}

Using the form of the Riccati BSDE (\ref{RicBSDE}), one infers the following
explicit condition.

\begin{corollary}
\label{CorMetric0Ctrl}A necessary and sufficient condition for exact
null-controllability of (\ref{SDE0}) with initial mode $\gamma_{0}\in E$ at
time $T>0$ is that the positive-semidefinite matrix $k_{0}%
:=\underset{\varepsilon>0}{\inf}K_{0}^{\varepsilon},$ where, for every
$\varepsilon>0,$ $K^{\varepsilon}$ is the unique solution of the Riccati
equation (\ref{RicBSDE}) for $\mathcal{B}:=BB^{\ast}$ be positive definite. In
this case, the metric $p$ given in (\ref{pMetric}) is induced by $k_{0}$ i.e.%
\[
p\left(  y\right)  =\sqrt{\left\langle k_{0}y,y\right\rangle },\text{ for all
}y\in%
\mathbb{R}
^{N}.
\]

\end{corollary}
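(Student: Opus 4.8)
The plan is to connect the abstract variational quantity $p^2(y)$ defined in \eqref{pMetric} with the Riccati value $\langle k_0 y,y\rangle$ by identifying $p^2$ with the value of a penalized (coercive) control problem, for which the solution of \eqref{RicBSDE} plays the role of the optimal cost operator, and then to pass to the limit $\varepsilon\downarrow 0$. The key observation is that, for fixed $\varepsilon>0$, the matrix $K^\varepsilon_0$ is the value operator of a linear-quadratic control problem for the controlled forward system \eqref{SDE'}: specifically I expect
\[
\left\langle K_0^{\varepsilon}y,y\right\rangle=\inf_{v\in\mathcal{L}^{2}\left(q;\mathbb{R}^{N}\right)}\mathbb{E}^{0,\gamma_{0}}\left[\int_{0}^{T}\left\langle \mathcal{B}\left(\Gamma_t^{\gamma_0}\right)Y_t^{y,v},Y_t^{y,v}\right\rangle dt+\varepsilon\int_{0}^{T}\int_{E}\left|v_t(\theta)\right|^{2}\widehat{q}\left(dt,d\theta\right)\right],
\]
where $\mathcal{B}=BB^{\ast}$, so that $\langle \mathcal{B} Y,Y\rangle=|B^{\ast}Y|^{2}=|\Pi_{(\ker B^{\ast})^{\perp}}Y|^{2}_{B}$ captures exactly the integrand appearing in \eqref{pMetric} up to the equivalence of the seminorms $|B^{\ast}\cdot|$ and $|\Pi_{(\ker B^{\ast})^{\perp}}\cdot|$ on the finite-dimensional range.

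First I would establish the representation above by a standard completion-of-squares / Itô argument: applying the product rule to $t\mapsto\langle K_t^{\varepsilon}Y_t^{y,v},Y_t^{y,v}\rangle$ along the dynamics \eqref{SDE'} and using the specific Riccati structure of \eqref{RicBSDE} (with the terms $f^{n}$, $g^{n}$ built precisely to annihilate the cross terms), one obtains an exact square in $v$ plus the running cost, integrated against the compensator $\widehat{q}$. The positive-definiteness of $\varepsilon I+K^{\varepsilon}+H^{\varepsilon}$ guaranteed in Theorem \ref{ThRiccati} ensures the minimizing feedback is well-defined and the infimum is attained, yielding the displayed identity with boundary value $K_T^{\varepsilon}=0$ contributing nothing. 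This is the technical heart and the step most likely to require care, since one must verify the Itô expansion handles the jump terms of the predictable process $H^{\varepsilon}$ correctly and that the completed square is genuinely nonnegative.

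Next I would pass to the limit. The family $\varepsilon\mapsto\langle K_0^{\varepsilon}y,y\rangle$ is nondecreasing as $\varepsilon\downarrow0$ (more penalization raises the infimum), bounded below by $0$, and each value is monotone in $\varepsilon$, so $k_0:=\inf_{\varepsilon>0}K_0^{\varepsilon}$ exists as a positive-semidefinite matrix and $\langle k_0 y,y\rangle=\inf_{\varepsilon>0}\langle K_0^{\varepsilon}y,y\rangle$. By a monotone-convergence / diagonal-minimizing-sequence argument, the limit of the penalized infima equals the infimum of the unpenalized running cost, namely
\[
\left\langle k_0 y,y\right\rangle=\inf_{v\in\mathcal{L}^{2}\left(q;\mathbb{R}^{N}\right)}\mathbb{E}^{0,\gamma_{0}}\left[\int_{0}^{T}\left|\Pi_{\left(\ker B^{\ast}\left(\Gamma_t^{\gamma_0}\right)\right)^{\perp}}\left(Y_t^{y,v}\right)\right|^{2}dt\right]=p^{2}(y),
\]
modulo replacing $|B^{\ast}\cdot|^{2}$ by $|\Pi_{(\ker B^{\ast})^{\perp}}\cdot|^{2}$, which differ only by a fixed positive-definite factor on $(\ker B^{\ast})^{\perp}$ and hence induce the same norm-versus-pseudonorm dichotomy.

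Finally I would close the equivalence: by Proposition \ref{Exact0CtrlProp}, exact null-controllability holds iff there is $C_T>0$ with $|y|^{2}\le C_T\, p^{2}(y)$ for all $y$, i.e. iff $p$ is a norm on $\mathbb{R}^{N}$, which by the preceding identity is equivalent to $k_0$ being positive definite (a positive-semidefinite matrix induces a norm iff it is strictly positive definite). The identity $p(y)=\sqrt{\langle k_0 y,y\rangle}$ then follows directly from the limit representation. The main obstacle, as noted, is the rigorous verification of the completion-of-squares identity for the penalized value in Step one; once that is in place, the monotone limit and the norm/definiteness equivalence are routine.
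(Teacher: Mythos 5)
Your proposal is correct and follows essentially the same route as the paper: an It\^{o}/completion-of-squares identity for $\left\langle K_{t}^{\varepsilon}Y_{t}^{y,v},Y_{t}^{y,v}\right\rangle$ identifying $\left\langle K_{0}^{\varepsilon}y,y\right\rangle$ as the $\varepsilon$-penalized value, followed by the monotone limit $\varepsilon\downarrow0$ and the norm/positive-definiteness equivalence via Proposition \ref{Exact0CtrlProp}. Your remark on the discrepancy between $\left\vert B^{\ast}\cdot\right\vert^{2}$ and $\left\vert \Pi_{\left(\ker B^{\ast}\right)^{\bot}}\cdot\right\vert^{2}$ is in fact slightly more careful than the paper's own sketch, which silently identifies the two.
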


\begin{proof}
This result is quite classical (see, e.g. \cite{Sarbu_Tessitore_2001} for the
Brownian-noise case). For our readers' sake, we sketch the proof. Let us fix,
for the time being $\varepsilon>0.$ Then, according to Theorem \ref{ThRiccati}%
, the Riccati equation (\ref{RicBSDE}) admits a unique solution. A simple
application of It\^{o}'s formula (cf. \cite[Chapter II, Section 5, Theorem
5.1]{Ikeda_Watanabe_1981}) to $\left\langle K_{t}^{\varepsilon}Y_{t}%
^{y,v},Y_{t}^{y,v}\right\rangle $ on $\left[  0,T\right]  $ yields%
\begin{align*}
&  \mathbb{E}^{0,\gamma_{0}}\left[  \int_{0}^{T}\left\vert \Pi_{\left[
Ker\left(  B^{\ast}\left(  \Gamma_{t}^{\gamma_{0}}\right)  \right)  \right]
^{\bot}}Y_{t}^{y,v}\right\vert ^{2}dt\right] \\
&  =\left\langle K_{0}^{\varepsilon}y,y\right\rangle -\varepsilon
\mathbb{E}^{0,\gamma_{0}}\left[  \int_{0}^{T}\left\vert u_{t}\right\vert
^{2}dt\right]  +\mathbb{E}^{0,\gamma_{0}}\left[  \int_{0}^{T}\left\vert
\begin{array}
[c]{c}%
\left(  \varepsilon I+K_{t}^{\varepsilon}+H_{t}^{\varepsilon}\left(
\theta\right)  \right)  ^{-\frac{1}{2}}f_{t}\left(  \theta\right)
Y_{t-}^{y,v}\\
-\left(  \varepsilon I+K_{t}^{\varepsilon}+H_{t}^{\varepsilon}\left(
\theta\right)  \right)  ^{\frac{1}{2}}v_{t}\left(  \theta\right)
\end{array}
\right\vert ^{2}dt\right]  .
\end{align*}
One easily notes that $\underset{v\in\mathcal{L}^{2}\left(  q;%
\mathbb{R}
^{N}\right)  }{\inf}\mathbb{E}^{0,\gamma_{0}}\left[  \int_{0}^{T}\left\vert
\Pi_{\left[  Ker\left(  B^{\ast}\left(  \Gamma_{t}^{\gamma_{0}}\right)
\right)  \right]  ^{\bot}}Y_{t}^{y,v}\right\vert ^{2}dt\right]
=\underset{\varepsilon\rightarrow0}{\lim\inf}\left\langle K_{0}^{\varepsilon
}y,y\right\rangle $ and the conclusion follows.
\end{proof}

\subsection{Non-equivalence Between Exact and Approximate
Null-Controllability\label{Sec3.3}}

The following example presents a switching system which is approximately
null-controllable without being exactly null-controllable.

\begin{example}
\label{ExpNon0Ctrl}We consider a two-dimensional state space and a
one-dimensional control space. Moreover, we consider the mode to switch
randomly between three states (for simplicity, $E=\left\{  e^{1},e^{2}%
,e^{3}\right\}  $ is taken to be the standard basis of $%
\mathbb{R}
^{3}$). The transition measure is given by $Q:=\left(
\begin{array}
[c]{ccc}%
0 & 1 & 0\\
0 & 0 & 1\\
0 & 1 & 0
\end{array}
\right)  .$ The coefficients are given by%
\[
\left.  A\left(  e^{1}\right)  =A:=\left(
\begin{array}
[c]{cc}%
0 & 0\\
1 & 0
\end{array}
\right)  ,\text{ }A\left(  \gamma\right)  :=0_{2\times2},\text{ if }\gamma\neq
e^{1},\text{ }B:=\left(
\begin{array}
[c]{c}%
1\\
0
\end{array}
\right)  ,\text{ }C\left(  \gamma\right)  :=0_{2\times2}.\right.
\]
\newline\textbf{a) Approximate null-controllability}\newline With the
definition (\ref{V^n_spaces}), one easily establishes $V_{\gamma}%
^{M,n}=\left\{
\begin{array}
[c]{c}%
\ker B^{\ast},\text{ if }\gamma\neq e^{1},\\
\left\{  0\right\}  \text{, if }\gamma=e^{1}.
\end{array}
\right.  ,$ for every $M\geq1$ and every $0\leq n\leq M.$ It follows that the
system is null-controllable if and only if $\gamma_{0}=e^{1}.$ \newline%
\textbf{b) Limit of the Riccati equations in the approximate null-controllable
case }(initial mode $\gamma_{0}=e^{1}$).\newline Quid est for the
controllability metric ? In this case, we recall that the limit of the
solutions of the Riccati equations is given by%
\[
p^{2}\left(  y\right)  :=\inf_{v\in\mathcal{L}^{2}\left(  q;%
\mathbb{R}
^{N}\right)  }\mathbb{E}^{0,\gamma_{0}}\left[  \int_{0}^{T}\left\vert B^{\ast
}Y_{t}^{y,v}\right\vert ^{2}dt\right]  ,\text{ where }dY_{t}^{y,v}=\int%
_{E}v_{t}\left(  \theta\right)  \widetilde{q}\left(  dt,d\theta\right)
-A^{\ast}\left(  \Gamma_{t}^{\gamma_{0}}\right)  Y_{t}^{y,v}dt,\text{ }%
Y_{0}^{y,v}=y\in%
\mathbb{R}
^{2}.
\]
Starting from $y:=\left(
\begin{array}
[c]{c}%
0\\
1
\end{array}
\right)  ,$ with the feedback control process $v_{t}^{\varepsilon}:=\left(
-\left(
\begin{array}
[c]{cc}%
1 & 0\\
0 & 0
\end{array}
\right)  Y_{t}+\left(
\begin{array}
[c]{c}%
0\\
\frac{e^{2\varepsilon}-1}{-e^{\varepsilon}+1+\varepsilon}1_{t\in\left[
\varepsilon,2\varepsilon\right]  }%
\end{array}
\right)  \right)  1_{t\leq T_{1}},$ one gets
\[
\left\langle Y_{t}^{y,v},\left(
\begin{array}
[c]{c}%
1\\
0
\end{array}
\right)  \right\rangle =\left[  \left(  1-e^{t}\right)  1_{t\leq\varepsilon
}+\left[  1-e^{t}+\left(  t-\varepsilon+1-e^{t-\varepsilon}\right)
\frac{e^{2\varepsilon}-1}{-e^{\varepsilon}+1+\varepsilon}\right]
1_{\varepsilon\leq t\leq2\varepsilon}\right]  1_{t\leq T_{1}}.
\]
One easily notes that $0\geq\left\langle Y_{t}^{y,v},\left(
\begin{array}
[c]{c}%
1\\
0
\end{array}
\right)  \right\rangle \geq c_{\varepsilon}:=-2\left(  e^{2\varepsilon
}-1\right)  .$ Then, by taking infimum over $\varepsilon>0$, it follows that
$p^{2}\left(
\begin{array}
[c]{c}%
0\\
1
\end{array}
\right)  =0$ and it cannot induce a norm. As consequence, by invoking
Corollary \ref{CorMetric0Ctrl}, the system fails to be (exactly) null-controllable.
\end{example}

\begin{remark}
Of course, a direct proof of null-controllability can also be given based on
the eigenvector $\left(
\begin{array}
[c]{c}%
0\\
1
\end{array}
\right)  $. Absence of null-controllability is obvious for $\gamma_{0}\neq
e^{1}$ (the system is not even approximately null-controllable). In the case
$\gamma_{0}=e^{1},$ we reason by contradiction. Let us assume that, for some
admissible control process $u$, the system is exactly controllable at time $T$
starting from $x_{0}=\left(
\begin{array}
[c]{c}%
0\\
1
\end{array}
\right)  .$ Then, prior to the first jump time,
\[
X_{t}^{x_{0},u}=\left(
\begin{array}
[c]{cc}%
1 & 0\\
t & 1
\end{array}
\right)  \left(
\begin{array}
[c]{c}%
0\\
1
\end{array}
\right)  +\int_{0}^{t}\left(
\begin{array}
[c]{cc}%
1 & 0\\
t-s & 1
\end{array}
\right)  \left(
\begin{array}
[c]{c}%
u_{s}\\
0
\end{array}
\right)  ds=\left(
\begin{array}
[c]{c}%
\int_{0}^{t}u_{s}ds\\
1+\int_{0}^{t}\left(  t-s\right)  u_{s}ds
\end{array}
\right)  .
\]
Since, on $\left[  0,T_{1}\right)  ,$ $u$ is deterministic and square
integrable, there exists $T>t_{0}>0$ such that $1+\int_{0}^{t}\left(
t-s\right)  u_{s}ds>\frac{1}{2},$ for every $t\leq t_{0}$ (consequence of the
absolute continuity)$.$ Hence, on $T_{1}\leq t_{0},$ one gets $\left\langle
X_{T_{1}-}^{x_{0},u},\left(
\begin{array}
[c]{c}%
0\\
1
\end{array}
\right)  \right\rangle >\frac{1}{2}.$ Second, one notes that $X_{T_{1}%
-}^{x_{0},u}=X_{T_{1}}^{x_{0},u}$ and $d\left\langle X_{t},\left(
\begin{array}
[c]{c}%
0\\
1
\end{array}
\right)  \right\rangle =0$ for $t\geq T_{1}$ to conclude that $\left\langle
X_{T}^{x_{0},u},\left(
\begin{array}
[c]{c}%
0\\
1
\end{array}
\right)  \right\rangle >\frac{1}{2},$ on $T_{1}\leq t_{0}.$ Since
$\mathbb{P}\left(  T_{1}\leq t_{0}\right)  =1-e^{-t_{0}}>0,$ it follows that
$u$ cannot lead to $0$ with full probability. Therefore, although it is
approximately null-controllable for some initial modes, the switch system is
never exactly null-controllable.
\end{remark}

\subsection{Exact Null-Controllability vs. Approximate (Full)
Controllability\label{Sec3.4}}

It has been shown in \cite{GoreacGrosuRotenstein_2016} that, in general,
approximate controllability is strictly stronger than approximate
null-controllability. In the light of the previous example, it is then natural
to ask oneself whether the condition on $p$ (given by (\ref{pMetric})) being a
metric implies approximate controllability of the initial system. The answer
is negative. We begin with an example of a system governed by an off/on mode
which is approximately null-controllable iff the initial mode is set off and
is never approximately controllable. We show that, for this system, the
Riccati equations give a controllability metric (iff the initial mode is set off).

\begin{example}
\label{Exp0ctrlNonAppCtrl}We consider a two-dimensional state space and a
one-dimensional control space. Moreover, we consider the mode to switch
randomly between inactive $0$ and active $1$ (i.e. $E=\left\{  0,1\right\}
$). The coefficients are given by%
\[
A\left(  \gamma\right)  =A:=\left(
\begin{array}
[c]{cc}%
0 & 0\\
1 & 0
\end{array}
\right)  ,B:=\left(
\begin{array}
[c]{c}%
1\\
0
\end{array}
\right)  ,C\left(  \gamma,1-\gamma\right)  :=\left(
\begin{array}
[c]{cc}%
-1 & 0\\
\gamma & -1
\end{array}
\right)  .
\]
\newline\textbf{a) Approximate null-controllability}\newline One easily
establishes $V_{\gamma}^{M,n}=span\left\{  \gamma e_{2}\right\}  ,$ for every
$M\geq1$ and every $0\leq n\leq M.$ It follows that the system is
null-controllable if and only if $\gamma_{0}=0.$ \newline\textbf{b)
Approximate controllability }(initial mode $\gamma_{0}=0$)\newline One easily
checks that $Y_{t}:=\left(
\begin{array}
[c]{c}%
0\\
\Gamma_{t}^{0}%
\end{array}
\right)  ,Z_{t}\left(  \cdot\right)  =\left(
\begin{array}
[c]{c}%
0\\
\left(  -1\right)  ^{\Gamma_{t-}^{0}}%
\end{array}
\right)  $, for $0\leq t\leq T$ satisfies the BSDE (\ref{BSDE0}) with final
data $Y_{T}=\left(
\begin{array}
[c]{c}%
0\\
\Gamma_{T}^{0}%
\end{array}
\right)  .$ Since this solution stays in $\ker B^{\ast}$ and it is not
trivially zero, it follows that the system is (never) approximately
controllable. \newline\textbf{c) Riccati equations in the approximate
null-controllable case }(initial mode $\gamma_{0}=0$)\newline One easily notes
that the Riccati equations lead to
\begin{align*}
dk^{0,\varepsilon}\left(  0,t\right)  =  &  \left[  k^{0,\varepsilon}\left(
0,t\right)  \left(
\begin{array}
[c]{cc}%
0 & 1\\
0 & 0
\end{array}
\right)  +\left(
\begin{array}
[c]{cc}%
0 & 0\\
1 & 0
\end{array}
\right)  k^{0,\varepsilon}\left(  0,t\right)  -\left(
\begin{array}
[c]{cc}%
1 & 0\\
0 & 0
\end{array}
\right)  +k^{0,\varepsilon}\left(  0,t\right)  \right]  dt\\
+  &  \left[  \widehat{k}^{1,\varepsilon}\left(  0,0,t,1\right)  \left(
\varepsilon I+\widehat{k}^{1,\varepsilon}\left(  0,0,t,1\right)  \right)
^{-1}\widehat{k}^{1,\varepsilon}\left(  0,0,t,1\right)  -\widehat{k}%
^{1,\varepsilon}\left(  0,0,t,1\right)  \right]  dt,t\in\left[  0,T\right]  .
\end{align*}
Since $\widehat{k}^{n+1,\varepsilon}\left(  0,0,t,1\right)  \geq0,$ this
solution is at least equal to the one given by $\overline{K}_{t}:=\left(
\begin{array}
[c]{cc}%
a\left(  t\right)  & b\left(  t\right) \\
b\left(  t\right)  & c\left(  t\right)
\end{array}
\right)  ,$ where $a\left(  t\right)  :=1-e^{t-T},$ $b\left(  t\right)
:=\left(  T+1-t\right)  e^{t-T}-1$ and $c\left(  t\right)  :=2-\left(
1+\left(  T+1-t\right)  ^{2}\right)  e^{t-T}$. Hence, $\overline{K}_{0}$ is
positive definite (for every $T>0$) and so is $\underset{\varepsilon
\rightarrow0+}{\lim\inf}K_{0}^{0,\varepsilon}$ (to prove this, one simply
studies the sign of the function $T\mapsto\left(  1-e^{-T}\right)  \left[
2-\left(  2+T^{2}+2T\right)  e^{-T}\right]  -\left[  \left(  T+1\right)
e^{-T}-1\right]  ^{2}$ on $%
\mathbb{R}
_{+}^{\ast}$).
\end{example}

\begin{remark}
The reader may want to note that in the case presented in the previous
example, the system is approximately null-controllable if and only if it is
(exactly) null-controllable. To give an explicit control leading from $x_{0}$
to $0$ for the initial setting $\gamma_{0}=0,$ one proceeds as follows. Prior
to the first jump, the system is a deterministic one and given by
\[
d\Phi_{t}^{x_{0},u^{1}}=\left(  \left(  A-C\left(  0,1\right)  \right)
\Phi_{t}^{x_{0},u^{1}}+Bu_{t}^{1}\right)  dt,\text{ }\Phi_{0}^{x_{0},u^{1}%
}=x_{0}\in%
\mathbb{R}
^{2}.
\]
Since Kalman's condition is satisfied for this dterministic system, it is
exactly null-controllable at time $T>0.$ An explicit control is obtained by
considering the (deterministic) controllability Gramian $G_{T}$, respectively
the induced (open-loop) control process $u^{1}$ given by
\[
G_{T}:=\int_{0}^{T}e^{\left(  A-C\left(  0,1\right)  \right)  \left(
T-s\right)  }BB^{\ast}e^{\left(  A-C\left(  0,1\right)  \right)  ^{\ast
}\left(  T-s\right)  }ds,\text{ }u^{1}\left(  t\right)  :=-B^{\ast}e^{\left(
A-C\left(  0,1\right)  \right)  ^{\ast}\left(  T-t\right)  }G_{T}%
^{-1}e^{\left(  A-C\left(  0,1\right)  \right)  ^{\ast}T}x_{0}1_{t\leq T}.
\]
We obtain a stochastic control by setting $u^{p}\left(  t\right)  =0,$ i.e. we
take null-control after the first jumping time. Then, it is obvious that, on
$T_{1}\geq T,$ $X_{T}^{x_{0},u}=\Phi_{T}^{x_{0},u^{1}}=0$. On $T_{1}<T,$
$X_{T_{1}}^{x_{0},u}=0$ and the conclusion follows.
\end{remark}

Finally, one is entitled to ask whether approximate null-controllability
implies exact null-controllability. The answer is, again, negative proving
that approximate controllability and exact null-controllability are, in
general, completely different properties. To illustrate this, let us take,
once again, a glance at the first example.

\begin{example}
We consider a two-dimensional state space and a one-dimensional control space.
Moreover, we consider the mode to switch randomly between three states (for
simplicity, $E=\left\{  e_{1},e_{2},e_{3}\right\}  $ is taken to be the
standard basis of $%
\mathbb{R}
^{3}$). The transition measure is given by $Q:=\left(
\begin{array}
[c]{ccc}%
0 & 1 & 0\\
0 & 0 & 1\\
0 & 1 & 0
\end{array}
\right)  .$ The coefficients are given by%
\[
\left.  A\left(  e_{1}\right)  =A:=\left(
\begin{array}
[c]{cc}%
0 & 0\\
1 & 0
\end{array}
\right)  ,\text{ }A\left(  \gamma\right)  :=0_{2\times2},\text{ if }\gamma\neq
e_{1},\text{ }B:=\left(
\begin{array}
[c]{c}%
1\\
0
\end{array}
\right)  ,\text{ }C\left(  \gamma\right)  :=0_{2\times2}\right.
\]
As we have seen before (in Example \ref{ExpNon0Ctrl}), this system is never
exactly null-controllable. We assume the system to only jump once. We consider
a solution of%
\[
dY_{t}^{y,v}=\int_{E}v_{t}\left(  \theta\right)  \widetilde{q}\left(
dt,d\theta\right)  -A^{\ast}\left(  \Gamma_{t}^{\gamma_{0}}\right)
Y_{t}^{y,v}dt,Y_{0}^{y,v}=y\in%
\mathbb{R}
^{N}%
\]
that belongs to $\ker B^{\ast},$ $\mathbb{P}^{0,e_{1}}-a.s.$ Due to the
approximate null-controllability, it follows that $y=0.$ Prior to the first
jump, $v$ is given by a deterministic function $v^{1}=\left(
\begin{array}
[c]{c}%
v^{1,1}\\
v^{1,2}%
\end{array}
\right)  $ and $Y_{t}^{y,v}$ coincides with the deterministic solution of
$d\Phi_{t}^{v^{1}}=\left(  -v_{t}^{1}-A^{\ast}\Phi_{t}^{v^{1}}\right)  dt,$
$\Phi_{0}^{v^{1}}=0.$ Since $\Phi_{t}^{v^{1}}\in\ker B^{\ast}=span\left\{
\left(
\begin{array}
[c]{c}%
0\\
1
\end{array}
\right)  \right\}  $ (for all $t\in\left[  0,T\right]  $) one has, for
$Leb-$almost all $t\in\left[  0,T\right]  ,$ $-v_{t}^{1,1}=\left\langle
\Phi_{t}^{v^{1}},\left(
\begin{array}
[c]{c}%
0\\
1
\end{array}
\right)  \right\rangle .$ Hence, at the first jumping time, one has
\[
0=\left\langle Y_{T_{1}}^{y,v},\left(
\begin{array}
[c]{c}%
1\\
0
\end{array}
\right)  \right\rangle =\left\langle \Phi_{T_{1}}^{v},\left(
\begin{array}
[c]{c}%
1\\
0
\end{array}
\right)  \right\rangle -\left\langle \Phi_{T_{1}}^{v^{1}},\left(
\begin{array}
[c]{c}%
0\\
1
\end{array}
\right)  \right\rangle =-\left\langle \Phi_{T_{1}}^{v^{1}},\left(
\begin{array}
[c]{c}%
0\\
1
\end{array}
\right)  \right\rangle .
\]
One deduces that $\left\langle \Phi_{t}^{v^{1}},\left(
\begin{array}
[c]{c}%
0\\
1
\end{array}
\right)  \right\rangle =0$ and $v_{t}^{1,1}=0,$ $Leb$-almost surely on
$\left[  0,T\right]  .$ Then $t$he derivative of $\left\langle \Phi_{t}%
^{v^{1}},\left(
\begin{array}
[c]{c}%
0\\
1
\end{array}
\right)  \right\rangle $ is null i.e. $v_{t}^{1,2}=0,$ $Leb$-almost surely on
$\left[  0,T\right]  $. As a consequence, for all (due to right continuity)
$t\in\left[  0,T_{1}\right]  ,$ $Y_{t}^{y,v}=0,$ $\mathbb{P}^{0,e_{1}}$-almost
surely$.$ Since the process is no longer allowed to jump after $T_{1},$ it
follows that the equality actually holds on $\left[  0,T\right]  $ and the
initial system (\ref{SDE0}) is approximately controllable (cf. Theorem
\ref{dualityTh}).
\end{example}

\bibliographystyle{plain}
\bibliography{bibliografie_17042016}

\def\cprime{$'$}
\begin{thebibliography}{10}

\bibitem{Barbu_Rascanu_Tessitore_2003}
V.~Barbu, A.~R{\u{a}}{\c{s}}canu, and G.~Tessitore.
\newblock Carleman estimates and controllability of linear stochastic heat
  equations.
\newblock {\em Appl. Math. Optim.}, 47(2):97--120, 2003.

\bibitem{Bremaud_1981}
Pierre Br{\'e}maud.
\newblock {\em Point processes and queues : martingale dynamics}.
\newblock Springer series in statistics. Springer-Verlag, New York, 1981.

\bibitem{Buckdahn_Quincampoix_Tessitore_2006}
R.~Buckdahn, M.~Quincampoix, and G.~Tessitore.
\newblock A characterization of approximately controllable linear stochastic
  differential equations.
\newblock In {\em Stochastic partial differential equations and
  applications---{VII}}, volume 245 of {\em Lect. Notes Pure Appl. Math.},
  pages 53--60. Chapman \& Hall/CRC, Boca Raton, FL, 2006.

\bibitem{CFJ_2014}
F.~Confortola, M.~Fuhrman, and J.~Jacod.
\newblock Backward stochastic differential equations driven by a marked point
  process: an elementary approach, with an application to optimal control.
\newblock {\em Annals of Applied Probability}, to appear, 2015.
\newblock arXiv:1407.0876.

\bibitem{cook_gerber_tapscott_98}
D.~L. Cook, A.~N. Gerber, and S.~J. Tapscott.
\newblock Modelling stochastic gene expression: Implications for
  haploinsufficiency.
\newblock {\em Proc. Natl. Acad. Sci. USA}, 95:15641--15646, 1998.

\bibitem{crudu_debussche_radulescu_09}
A.~Crudu, A.~Debussche, and O.~Radulescu.
\newblock Hybrid stochastic simplifications for multiscale gene networks.
\newblock {\em BMC Systems Biology}, page 3:89, 2009.

\bibitem{Curtain_86}
R.~F. Curtain.
\newblock {Invariance concepts in infinite dimensions}.
\newblock {\em {SIAM J. Control and Optim.}}, {24}({5}):{1009--1030}, {SEP}
  {1986}.

\bibitem{davis_93}
M.~H.~A. Davis.
\newblock {\em {M}arkov models and optimization}, volume~49 of {\em Monographs
  on Statistics and Applied Probability}.
\newblock Chapman \& Hall, London, 1993.

\bibitem{Fernandez_Cara_Garrido_atienza_99}
E.~Fern{\'a}ndez-Cara, M.~J. Garrido-Atienza, and J.~Real.
\newblock On the approximate controllability of a stochastic parabolic equation
  with a multiplicative noise.
\newblock {\em C. R. Acad. Sci. Paris S\'er. I Math.}, 328(8):675--680, 1999.

\bibitem{G17}
D.~Goreac.
\newblock A {K}alman-type condition for stochastic approximate controllability.
\newblock {\em Comptes Rendus Mathematique}, 346(3--4):183 -- 188, 2008.

\bibitem{G16}
D.~Goreac.
\newblock Approximate controllability for linear stochastic differential
  equations in infinite dimensions.
\newblock {\em Applied Mathematics and Optimization}, 60(1):105--132, 2009.

\bibitem{G10}
D.~Goreac.
\newblock A note on the controllability of jump diffusions with linear
  coefficients.
\newblock {\em IMA Journal of Mathematical Control and Information},
  29(3):427--435, 2012.

\bibitem{G1}
D.~Goreac.
\newblock Controllability properties of linear mean-field stochastic systems.
\newblock {\em Stochastic Analysis and Applications}, 32(02):280--297, 2014.

\bibitem{GoreacMartinez2015}
D.~Goreac and M.~Martinez.
\newblock Algebraic invariance conditions in the study of approximate (null-)
  controllability of {M}arkov switch processes.
\newblock {\em Math. Control Signals Systems}, 27(4):551--578, 2015.

\bibitem{GoreacGrosuRotenstein_2016}
Dan Goreac, Alexandra~Claudia Grosu, and Eduard-Paul Rotenstein.
\newblock Approximate and approximate null-controllability of a class of
  piecewise linear markov switch systems.
\newblock {\em {Systems \& Control Letters}}, 96:118 -- 123, 2016.

\bibitem{hasty_pradines_dolnik_collins_00}
J.~Hasty, J.~Pradines, M.~Dolnik, and J.J. Collins.
\newblock Noise-based switches and amplifiers for gene expression.
\newblock {\em PNAS}, 97(5):2075--2080, 2000.

\bibitem{Hautus}
M.~L.~J. Hautus.
\newblock Controllability and observability conditions of linear autonomous
  systems.
\newblock {\em Nederl. Akad. Wetensch. Proc. Ser. A 72 Indag. Math.},
  31:443--448, 1969.

\bibitem{Ikeda_Watanabe_1981}
N.~Ikeda and S.~Watanabe.
\newblock {\em Stochastic Differential Equations and Diffusion Processes},
  volume~24 of {\em North-Holland Mathematical Library}.
\newblock North-Holland Publishing Co., Amsterdam--New York; Kodansha, Ltd.,
  Tokyo, 1981.

\bibitem{Jacob_Partington_2006}
B.~Jacob and J.~R. Partington.
\newblock On controllability of diagonal systems with one-dimensional input
  space.
\newblock {\em {Systems \& Control Letters}}, 55(4):321 -- 328, 2006.

\bibitem{Jacob_Zwart_2001}
B.~Jacob and H.~Zwart.
\newblock Exact observability of diagonal systems with a finite-dimensional
  output operator.
\newblock {\em {Systems \& Control Letters}}, 43(2):101 -- 109, 2001.

\bibitem{Jacobsen}
Martin Jacobsen.
\newblock {\em Point Process Theory And Applications. Marked Point and
  Piecewise Deterministic Processes}.
\newblock Birkh{\"a}user Verlag GmbH, 2006.

\bibitem{Krishna29032005}
Sandeep Krishna, Bidisha Banerjee, T.~V. Ramakrishnan, and G.~V. Shivashankar.
\newblock Stochastic simulations of the origins and implications of long-tailed
  distributions in gene expression.
\newblock {\em Proceedings of the National Academy of Sciences of the United
  States of America}, 102(13):4771--4776, 2005.

\bibitem{Peng_94}
S.~Peng.
\newblock Backward stochastic differential equation and exact controllability
  of stochastic control systems.
\newblock {\em Progr. Natur. Sci. (English Ed.)}, 4:274--284, 1994.

\bibitem{DaPratoZabczyk1992}
G.~Da Prato and J.~Zabczyk.
\newblock {\em Stochastic Equations in Infinite Dimensions}.
\newblock Cambridge University Press, 1992.
\newblock Cambridge Books Online.

\bibitem{russell_Weiss_1994}
D.~Russell and G.~Weiss.
\newblock A general necessary condition for exact observability.
\newblock {\em SIAM Journal on Control and Optimization}, 32(1):1--23, 1994.

\bibitem{Schmidt_Stern_80}
E.~J. P.~G. Schmidt and R.~J. Stern.
\newblock Invariance theory for infinite dimensional linear control systems.
\newblock {\em {Applied Mathematics and Optimization}}, {6}({2}):{113--122},
  {1980}.

\bibitem{Sarbu_Tessitore_2001}
M.~Sirbu and G.~Tessitore.
\newblock {Null controllability of an infinite dimensional SDE with state- and
  control-dependent noise}.
\newblock {\em {Systems \& Control Letters}}, {44}({5}):{385--394}, {DEC 14}
  {2001}.

\bibitem{yong_zhou_99}
J.~Yong and X.Y. Zhou.
\newblock {\em Stochastic Controls. Hamiltonian Systems and HJB Equations}.
\newblock Springer-Verlag, New York, 1999.

\end{thebibliography}

\end{document}